\numberwithin{equation}{section}
\numberwithin{figure}{section}
\theoremstyle{plain}
\newtheorem{thm}{\protect\theoremname}[section]
\theoremstyle{definition}
\newtheorem{defn}[thm]{\protect\definitionname}
\theoremstyle{plain}
\newtheorem{lem}[thm]{\protect\lemmaname}
\newtheorem{cor}[thm]{\protect\corollaryname}
\theoremstyle{remark}
\newtheorem*{rem*}{\protect\remarkname}
\newtheorem{rem}[thm]{\protect\remarkname}
\theoremstyle{plain}
\newtheorem{prop}[thm]{\protect\propositionname}
\theoremstyle{definition}
\newtheorem{example}[thm]{\protect\examplename}
\providecommand{\corollaryname}{Corollary}
\providecommand{\definitionname}{Definition}
\providecommand{\examplename}{Example}
\providecommand{\lemmaname}{Lemma}
\providecommand{\propositionname}{Proposition}
\providecommand{\remarkname}{Remark}
\providecommand{\theoremname}{Theorem}
\begin{document}
\subjclass[2020]{Primary 47B65; Secondary 41A65, 42C40, 47A63, 47B10, 60G42}
\title[]{Wavelet-Packet Content for Positive Operators}
\begin{abstract}
We study positive operator decompositions associated with rooted trees
of orthogonal projections. In this sense, the refinement tree induces
an ``MRA in $B\left(H\right)_{+}$''. To each node we assign a positive
content operator, and these contents split along the tree and yield
a positive decomposition at each fixed depth. The resulting decomposition
gives a multiresolution description of positive operators adapted
to the tree. In the trace class setting, the scalar contents determine
a canonical boundary measure on the path space, and for each vector
the corresponding quadratic data admit a nonnegative integrable density
with respect to that measure. At fixed depth, we study greedy extraction
rules based on trace and Hilbert-Schmidt norm. The trace rule gives
a sharp geometric decay estimate for the trace of the positive remainder.
In the Hilbert-Schmidt setting, a depth dependent coherence parameter
measures departure from block diagonal form and yields geometric decay
bounds. We also study adaptive partitions up to a terminal depth.
In that setting, the change in total squared content under local refinement
is determined by off-diagonal interaction among the child contents.
This leads to an additive refinement calculus for adaptive decompositions
and recursive criteria for optimal adaptive partitions.
\end{abstract}

\author{Myung-Sin Song}
\address{(Myung-Sin Song) Department of Mathematics, Southern Illinois University
Edwardsville, Edwardsville, IL 62026, U.S.A.}
\email{msong@siue.edu}
\urladdr{http://www.siue.edu/\textasciitilde msong/}
\author{James Tian}
\address{(James Tian) Mathematical Reviews, 535 W. William St, Suite 210, Ann
Arbor, MI 48103, USA}
\email{james.ftian@gmail.com}
\keywords{positive operators, orthogonal projection trees, content operators,
Schatten classes, trace class, Hilbert-Schmidt class, greedy extraction,
adaptive partitions, boundary measures}
\maketitle

\section{Introduction}\label{sec:1}

Wavelet packets provide a familiar model for a refinement tree of
orthogonal subspaces. In the present setting, the term ``wavelet
packet'' serves mainly to indicate that refinement picture, and the
analysis depends only on the associated tree of orthogonal projections
and the refinement relations among them. The main object of the paper
is therefore not a specific wavelet packet construction, but the positive
operator decomposition attached to such a refinement system.

Fix a family of subspaces $\left\{ W_{w}:w\in\mathcal{T}\right\} $
with the associated orthogonal projections $\left\{ P_{w}\right\} _{w\in\mathcal{T}}$
satisfying the usual tree relations. Given a positive operator $R\in B\left(H\right)_{+}$,
the simplest node-wise quantities are the compressions $P_{w}RP_{w}$
or the scalar weights $\mathrm{tr}\left(P_{w}R\right)$ and $\left\langle x,P_{w}Rx\right\rangle $.
For the present problem, the more useful object is the content operator
\[
C_{w}\left(R\right)=R^{1/2}P_{w}R^{1/2}\in B\left(H\right)_{+}.
\]
These content operators stay in the positive cone, telescope along
the tree, and at each fixed depth $n$ give a positive decomposition
\[
R=\sum_{\left|w\right|=n}C_{w}\left(R\right).
\]
Thus every partial sum is positive, and so is the remainder. 

A basic feature is that the refinement tree acts \emph{strictly} on
the positive cone $B(H)_{+}$. Thus the packet structure yields a
multiresolution-type decomposition, with positive operator contents
attached to each node and a positive decomposition at every fixed
depth. There is then an operator level notion of scale and localization.

This leads to two related themes:
\begin{enumerate}
\item Fixed depth selection. At a chosen depth, one compares blocks by trace
or by Hilbert-Schmidt (HS) size and asks how fast the positive remainder
decreases under repeated removal of a single block.
\item Adaptive selection. One allows the partition itself to vary along
the tree and asks how the HS error depends on the nodes that are refined.
It is here that the tree structure enters most strongly.
\end{enumerate}
In the trace class setting, the content operators determine a boundary
measure on the path space of the tree. In \prettyref{sec:3} we show
that there is a unique finite Borel measure $\mu_{R}$ on $\partial\mathcal{T}$
such that 
\[
\mu_{R}\left(\left[w\right]\right)=\mathrm{tr}\left(C_{w}\left(R\right)\right)
\]
for every node $w$, and that for each $x\in H$ there is a nonnegative
density $e_{x}\in L^{1}\left(\partial\mathcal{T},\mu_{R}\right)$
representing the quadratic data $\left\langle x,C_{w}\left(R\right)x\right\rangle $
on cylinder sets. This boundary representation is useful for the fixed
depth trace rule, where one repeatedly removes the heaviest cylinder
at a given level.

Sections \ref{sec:4} and \ref{sec:5} develop the fixed depth theory.
In \prettyref{sec:4}, for a fixed depth $n$, we consider the trace-based
rule that removes a depth $n$ block of maximal trace and prove a
geometric decay estimate for the trace of the positive remainder.
In \prettyref{sec:5}, we consider the HS analogue. In that setting,
interaction among different blocks enters essentially, and we introduce
a depth $n$ coherence parameter $\Gamma_{n}\left(A\right)$ measuring
how far a positive HS operator is from being block diagonal relative
to the depth $n$ partition. This again leads to geometric decay estimates,
now in HS norm.

In \prettyref{sec:6}, we pass to adaptive partitions up to a terminal
depth $N$. For an active partition $\Lambda$, we study how the total
squared Hilbert-Schmidt content changes under local refinement. We
show that refining a node changes this quantity by an explicit nonnegative
defect determined by the off-diagonal interaction among its child
contents. This yields an additive refinement calculus along adaptive
chains of partitions and provides recursive criteria for adaptive
selection.

The fixed depth restriction reflects a finite resolution choice rather
than an attempt to optimize over the whole packet tree. A smaller
value of $n$ gives fewer and coarser packet blocks, while a larger
value of $n$ gives finer packet localization but a larger collection
of depth $n$ blocks from which each greedy step must choose. In this
sense, the dependence on $N_{n}$ in \prettyref{thm:4-1}, and in
the uniform worst-case bound of \prettyref{thm:5-3}, reflects the
tradeoff between resolution and worst-case extraction rate. The adaptive
results in \prettyref{sec:6} recast this issue by allowing the partition
itself to vary along the tree and by quantifying the effect of local
refinement through the defects $\delta_{w}\left(A\right)$.

\subsection*{Literature context}

Wavelet packets, best basis selection, and packet based compression
have a substantial literature. Classical packet constructions and
optimal decomposition methods appear in \cite{MR1155278,MR1282933,Wickerhauser1994AdaptedWA,Tuthill1995BookRA},
with later developments for packet libraries, graph based constructions,
and imaging applications in \cite{MR2747055,MR4248654,MR4580931}.
There are also multilevel systems and related orthogonal refinement
structures close to the present tree setting; see, for example, \cite{MR2001190,doi:10.1049/ip-vis:20030873,MR4619155,MR3103223,MR4008697,Dutkay2013GeneralizedWB}.
In the present setting, the relevant starting point is the associated
refinement scheme, while the analysis isolates the operator-theoretic
structure carried by the resulting tree of orthogonal projections.

Related positive-operator decompositions with monotone residual structure
and associated boundary/path-space constructions appear in \cite{MR5055731,WR2005}.
The present construction is more direct and more rigid: the rooted
tree of orthogonal projections is fixed in advance, and the content
operators $C_{w}\left(R\right)=R^{1/2}P_{w}R^{1/2}$ give a compatible
tree decomposition of R without introducing a separate residual flow
or branchwise dynamics.

The map $R\mapsto R^{1/2}PR^{1/2}$ is closely related to several
operator-theoretic themes. It is a \emph{nonlinear} positivity preserving
compression, and the surrounding background includes shorting, Schur
complement type constructions, and related compression procedures
\cite{MR287970,MR852902,MR2234254,MR2284176}. There are also connections
with classical work on operator ranges and the geometry of pairs of
projections \cite{MR293441,MR531986,MR2580440}. What is specific
here is that the refinement tree organizes a compatible family of
positive compressions and makes it possible to compare fixed depth
and adaptive decompositions within a single setting.

There is also an analogy with block selection procedures in projection
based iterative methods, such as Kaczmarz type algorithms \cite{MR2311862,MR2500924,MR3134343,jeong2025infinitedimensionaloperatorblockkaczmarzalgorithms}.
At a formal level, the fixed depth extraction rules in Sections \ref{sec:4}
and \ref{sec:5} resemble greedy block selection schemes. The difference
here is that both the extracted pieces and the remainders remain in
$B\left(H\right)_{+}$, and the remainder chain is monotone in the
Loewner order. As a result, one obtains a canonical limiting residual
operator without additional geometric assumptions.

\section{Packet content operators}\label{sec:2}

In this section we associate to each positive operator a family of
positive operators indexed by the refinement tree, and we state the
basic identities that this family satisfies.

Let $H$ be a Hilbert space. Let $\mathcal{T}$ be a rooted finite
branching tree, and let $\left\{ W_{w}\subset H:w\in\mathcal{T}\right\} $
be a family of closed subspaces with orthogonal projections $\left\{ P_{w}\right\} _{w\in\mathcal{T}}$,
subject to the following: 

For each $n\ge0$, the subspaces $\left\{ W_{w}:\left|w\right|=n\right\} $
form an orthogonal decomposition of $H$, where
\begin{equation}
H=\bigoplus_{\left|w\right|=n}W_{w},\qquad I=\sum_{\left|w\right|=n}P_{w}.\label{eq:b-1}
\end{equation}

For each node $w\in\mathcal{T}$, there is a finite set $\mathrm{ch}\left(w\right)\subset\mathcal{T}$
with 
\begin{equation}
W_{w}=\bigoplus_{v\in\mathrm{ch}\left(w\right)}W_{v},\qquad P_{w}=\sum_{v\in\mathrm{ch}\left(w\right)}P_{v},\label{eq:b-2}
\end{equation}
and 
\[
P_{v}P_{v'}=0\quad(v\ne v',\,v,v'\in\mathrm{ch}\left(w\right)).
\]

\begin{defn}
\label{def:2-1}For each $w\in\mathcal{T}$ and each $R\in B\left(H\right)_{+}$,
the packet content operator of $R$ at $w$ is 
\[
C_{w}\left(R\right):=R^{1/2}P_{w}R^{1/2}\in B\left(H\right)_{+}.
\]
\end{defn}

At each fixed depth $n$, the relation \prettyref{eq:b-1} gives the
positive decomposition
\[
R=\sum_{\left|w\right|=n}R^{1/2}P_{w}R^{1/2}=\sum_{\left|w\right|=n}C_{w}\left(R\right),
\]
and for every node $w\in\mathcal{T}$, the splitting \eqref{eq:b-2}
gives the refinement identity
\begin{equation}
C_{w}\left(R\right)=\sum_{v\in\mathrm{ch}\left(w\right)}C_{v}\left(R\right).\label{eq:2-3}
\end{equation}
Thus the family $\left\{ C_{w}\left(R\right)\right\} _{w\in\mathcal{T}}$
is an operator-valued partition of $R$ adapted to the tree.

For each $x\in H$ and each node $w$, we have 
\[
\left\langle x,C_{w}\left(R\right)x\right\rangle =\left\langle R^{1/2}x,P_{w}R^{1/2}x\right\rangle =\left\Vert P_{w}R^{1/2}x\right\Vert ^{2},
\]
so the quadratic form of $C_{w}\left(R\right)$ at $x$ can be interpreted
as the packet energy of the vector $R^{1/2}x$ in the subspace $W_{w}$. 

\subsection{Sequential content extraction}

We now introduce a sequential extraction process on $B\left(H\right)_{+}$
given by the packet projections.

Fix an arbitrary sequence of nodes $\left(w_{k}\right)_{k\ge1}$ in
$\mathcal{T}$. Starting from $R^{\left(0\right)}=R\in B\left(H\right)_{+}$,
we define inductively for $k\ge1$ 
\[
D_{k}=C_{w_{k}}(R^{\left(k-1\right)})=R^{\left(k-1\right)1/2}P_{w_{k}}R^{\left(k-1\right)1/2},
\]
\[
R^{\left(k\right)}=R^{\left(k-1\right)}-D_{k}=R^{\left(k-1\right)1/2}\left(I-P_{w_{k}}\right)R^{\left(k-1\right)1/2}.
\]
Each $D_{k}$ is a positive operator extracted at step $k$, and each
$R^{\left(k\right)}$ is also positive. The construction is nonlinear:
the operator $D_{k}$ depends on the entire history $\left(w_{1},\dots,w_{k}\right)$
through  $R^{\left(k-1\right)}$, and the order in which nodes are
chosen matters in general.

In particular, we have
\[
0\le R^{\left(k\right)}\le R^{\left(k-1\right)}\le\cdots\le R^{\left(0\right)}=R
\]
for all $k\ge1$. Consequently there is always a limiting operator.
\begin{lem}
\label{lem:2-2}There exists $R^{\left(\infty\right)}\in B\left(H\right)_{+}$
such that $R^{\left(k\right)}\to R^{\left(\infty\right)}$ in the
strong operator topology and $0\le R^{\left(\infty\right)}\le R^{\left(k\right)}$
for all $k$. Moreover, for every $N\ge1$ one has 
\begin{equation}
R=R^{\left(N\right)}+\sum^{N}_{k=1}D_{k},\label{eq:2-4-1}
\end{equation}
and the series $\sum_{k\ge1}D_{k}$ converges strongly to $R-R^{\left(\infty\right)}$. 
\end{lem}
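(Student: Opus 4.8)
The plan is to treat $\left(R^{(k)}\right)_{k\ge0}$ as what it is: a monotone decreasing sequence of positive operators bounded below by $0$, and to invoke the monotone convergence principle for bounded self-adjoint operators (Vigier's theorem). Concretely, first I would fix $x\in H$ and observe that $\left\langle x,R^{(k)}x\right\rangle$ is a decreasing sequence of nonnegative reals, hence convergent; by polarization $\left\langle x,R^{(k)}y\right\rangle$ converges for all $x,y\in H$. Since $\bigl\|R^{(k)}\bigr\|\le\|R\|$ by the established ordering, the limiting sesquilinear form is bounded and defines a positive operator $R^{(\infty)}\in B(H)_{+}$ with $\left\langle x,R^{(\infty)}y\right\rangle=\lim_{k}\left\langle x,R^{(k)}y\right\rangle$, so $R^{(k)}\to R^{(\infty)}$ weakly.

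The step I expect to carry the real content is the upgrade from weak to strong convergence. Here I would use the elementary inequality $\|Tx\|^{2}\le\|T\|\left\langle x,Tx\right\rangle$, valid for every $T\in B(H)_{+}$ (a consequence of $T^{2}\le\|T\|\,T$ via functional calculus). Applying it to $T=R^{(k)}-R^{(\infty)}\ge0$, and using the uniform bound $\bigl\|R^{(k)}-R^{(\infty)}\bigr\|\le2\|R\|$, gives
\[
\bigl\|\bigl(R^{(k)}-R^{(\infty)}\bigr)x\bigr\|^{2}\le 2\|R\|\,\left\langle x,\bigl(R^{(k)}-R^{(\infty)}\bigr)x\right\rangle ,
\]
whose right-hand side tends to $0$ by the weak convergence already proved. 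This yields $R^{(k)}x\to R^{(\infty)}x$ for every $x$, i.e. strong convergence.

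For the ordering $0\le R^{(\infty)}\le R^{(k)}$, I would pass to the weak limit in the inequalities $R^{(j)}\le R^{(k)}$ (valid for $j\ge k$ since the sequence decreases): for each $x$, $\left\langle x,R^{(\infty)}x\right\rangle=\lim_{j}\left\langle x,R^{(j)}x\right\rangle\le\left\langle x,R^{(k)}x\right\rangle$, and nonnegativity of $R^{(\infty)}$ is inherited in the same way.

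Finally the telescoping identity \eqref{eq:2-4-1} is immediate: summing $D_{k}=R^{(k-1)}-R^{(k)}$ over $k=1,\dots,N$ collapses to $\sum_{k=1}^{N}D_{k}=R-R^{(N)}$, which rearranges to \eqref{eq:2-4-1}. Letting $N\to\infty$ and using the strong convergence $R^{(N)}\to R^{(\infty)}$ shows that the partial sums $R-R^{(N)}$ converge strongly to $R-R^{(\infty)}$, so the series $\sum_{k\ge1}D_{k}$ has strong sum $R-R^{(\infty)}$, as claimed. The only genuine obstacle is the weak-to-strong passage; if Vigier's theorem is cited outright, the statement reduces to bookkeeping around the telescoping sum.
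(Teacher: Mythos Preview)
Your proposal is correct and follows essentially the same approach as the paper: both invoke the monotone convergence theorem for decreasing sequences of positive operators (Vigier's theorem), followed by the obvious telescoping. You simply unpack the weak-to-strong upgrade that the paper cites as ``standard''; one cosmetic point is that you should record $R^{(k)}-R^{(\infty)}\ge0$ (which follows already from weak convergence) \emph{before} applying the inequality $\|Tx\|^{2}\le\|T\|\langle x,Tx\rangle$, since that inequality requires $T\ge0$.
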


\begin{proof}
For each $x\in H$ the sequence $\left\langle x,R^{\left(k\right)}x\right\rangle $
is nonincreasing and bounded below by $0$, hence convergent. Thus
the quadratic forms associated to $R^{\left(k\right)}$ decrease pointwise
to a bounded nonnegative form $q$ on $H$. By the standard monotone
convergence theorem for positive operators on a Hilbert space, there
is a unique $R^{\left(\infty\right)}\in B\left(H\right)_{+}$ such
that $R^{\left(k\right)}\to R^{\left(\infty\right)}$ strongly and
$0\le R^{\left(\infty\right)}\le R^{\left(k\right)}$ for all $k$.
The claimed identity follows by telescoping. 
\end{proof}

\begin{defn}
We call $\left(D_{k}\right)_{k\ge1}$ a content extraction sequence
for $R$ along the node sequence $\left(w_{k}\right)_{k\ge1}$, and
$R^{\left(\infty\right)}$ the limit associated to that sequence.
For each $N$ the partial sum $\sum^{N}_{k=1}D_{k}$ is an $N$-term
approximation of $R$, with  error $R^{\left(N\right)}$.
\end{defn}

We will use these identities in the next section to pass from the
consistent cylinder weights $\text{tr}\left(C_{w}\left(R\right)\right)$
to a canonical measure on $\partial\mathcal{T}$.

The remainder $R^{\left(\infty\right)}$ depends in general on the
chosen node sequence, and it is natural to ask when this limiting
residual must vanish. The next result gives a qualitative criterion
for exact recovery: if the extraction sequence returns infinitely
often to a finite family of nodes whose content operators resolve
every positive operator, then no positive content can remain in the
limit. The fixed depth case is an immediate corollary. In later sections,
the fixed depth greedy procedures will yield stronger quantitative
versions of this vanishing phenomenon through trace and HS decay estimates.
\begin{thm}
\label{thm:2-4} Let $\Lambda\subset\mathcal{T}$ be a finite set
of nodes such that 
\[
S=\sum_{w\in\Lambda}C_{w}\left(S\right)
\]
for every $S\in B\left(H\right)_{+}$. Let $\left(w_{k}\right)_{k\ge1}$
be a sequence of nodes in $\mathcal{T}$, and let $\left(R^{\left(k\right)}\right)_{k\ge0}$
be the associated remainder sequence. Suppose that every node $w\in\Lambda$
occurs infinitely often in $\left(w_{k}\right)_{k\ge1}$. Then 
\[
R^{\left(\infty\right)}=0.
\]
\end{thm}

\begin{proof}
Fix $w\in\Lambda$, and choose a subsequence $\left(k_{j}\right)_{j\ge1}$
such that $w_{k_{j}}=w$ for all $j$. By \prettyref{lem:2-2}, we
get (strong convergence)
\[
R^{\left(k_{j}-1\right)}\xrightarrow{s}R^{\left(\infty\right)}\qquad\text{and}\qquad R^{\left(k_{j}\right)}\xrightarrow{s}R^{\left(\infty\right)}.
\]
Since $R^{\left(k_{j}\right)}=R^{\left(k_{j}-1\right)}-C_{w}(R^{\left(k_{j}-1\right)})$,
it follows that $C_{w}(R^{\left(k_{j}-1\right)})\xrightarrow{s}0$. 

Because the operators $R^{\left(k_{j}-1\right)}$ are positive, uniformly
bounded, and converge strongly to $R^{\left(\infty\right)}$, their
square roots converge strongly to $R^{\left(\infty\right)1/2}$. Hence
\[
C_{w}(R^{\left(k_{j}-1\right)})=R^{\left(k_{j}-1\right)1/2}P_{w}R^{\left(k_{j}-1\right)1/2}\xrightarrow{s}R^{\left(\infty\right)1/2}P_{w}R^{\left(\infty\right)1/2}=C_{w}(R^{\left(\infty\right)}).
\]
Therefore $C_{w}(R^{\left(\infty\right)})=0$ for every $w\in\Lambda$.
Using the hypothesis on $\Lambda$ with $S=R^{\left(\infty\right)}$,
we obtain $R^{\left(\infty\right)}=\sum_{w\in\Lambda}C_{w}(R^{\left(\infty\right)})=0$. 
\end{proof}

\begin{cor}
\label{cor:2-5} Let $\left(w_{k}\right)_{k\ge1}$ be a sequence of
nodes in $\mathcal{T}$, and let $\left(R^{\left(k\right)}\right)_{k\ge0}$
be the associated remainder sequence. Suppose there exists $n\ge0$
such that every node $w\in\mathcal{T}$ with $\left|w\right|=n$ occurs
infinitely often in $\left(w_{k}\right)_{k\ge1}$. Then $R^{\left(\infty\right)}=0$. 
\end{cor}

\begin{proof}
Apply \prettyref{thm:2-4} with $\Lambda=\left\{ w\in\mathcal{T}:\left|w\right|=n\right\} $,
using the fixed depth decomposition 
\[
S=\sum_{\left|w\right|=n}C_{w}\left(S\right),\quad S\in B\left(H\right)_{+}.
\]
\end{proof}

\section{Content measures}\label{sec:3}

In this section we pass from the packet content operators to a boundary
measure on the path space of the tree. At each fixed depth, the quantities
$\mathrm{tr}\left(C_{w}\left(R\right)\right)$ define weights on the
corresponding packet blocks, and the refinement identity from \prettyref{sec:2}
shows that these weights are consistent across depths. \prettyref{thm:3-1}
makes this precise: it constructs a canonical finite Borel measure
$\mu_{R}$ on $\partial\mathcal{T}$ satisfying 
\[
\mu_{R}\left(\left[w\right]\right)=\mathrm{tr}\left(C_{w}\left(R\right)\right),
\]
and, for each $x\in H$, a nonnegative density representing the quadratic
data $\left\langle x,C_{w}\left(R\right)x\right\rangle $ on cylinder
sets. 

We write $\partial\mathcal{T}$ for the set of infinite paths in $\mathcal{T}$,
and for each finite word $w\in\mathcal{T}$ we denote by $\left[w\right]\subset\partial\mathcal{T}$
the corresponding cylinder set 
\[
\left[w\right]=\left\{ \omega\in\partial\mathcal{T}:\omega\text{ begins with }w\right\} .
\]

Recall that for $0<q<\infty$, the Schatten class $\mathcal{S}_{q}$
consists of compact operators $A$ whose singular values $\{s_{j}\left(A\right)\}$
satisfy $\sum_{j}s_{j}\left(A\right)^{q}<\infty$; when $A\ge0$ this
is equivalent to $\mathrm{tr}\left(A^{q}\right)<\infty$, and we write
$\left\Vert A\right\Vert ^{q}_{\mathcal{S}_{q}}:=\mathrm{tr}\left(A^{q}\right)$.
Thus $\mathcal{S}_{1}$ is the trace class, and $\mathcal{S}_{2}$
the space of Hilbert-Schmidt (HS) operators. 
\begin{thm}
\label{thm:3-1} Let $R\in\mathcal{S}_{1}\cap B\left(H\right)_{+}$
be a positive trace class operator, and let $\left\{ C_{w}\left(R\right)\right\} _{w\in\mathcal{T}}$
be its packet content operators.
\begin{enumerate}
\item There exists a unique finite Borel measure $\mu_{R}$ on $\partial\mathcal{T}$
such that 
\[
\mu_{R}\left(\left[w\right]\right)=\mathrm{tr}\left(C_{w}\left(R\right)\right)
\]
for every $w\in\mathcal{T}$. In particular $\mu_{R}\left(\partial\mathcal{T}\right)=\mathrm{tr}\left(R\right)$.
\item For each $x\in H$ there exists a function $e_{x}\in L^{1}\left(\partial\mathcal{T},\mu_{R}\right)$
with $e_{x}\ge0$ $\mu_{R}$-almost everywhere such that 
\[
\left\langle x,C_{w}\left(R\right)x\right\rangle =\int_{\left[w\right]}e_{x}\left(\omega\right)d\mu_{R}\left(\omega\right)
\]
for every node $w\in\mathcal{T}$. Also, 
\[
\left\langle x,Rx\right\rangle =\int_{\partial\mathcal{T}}e_{x}\left(\omega\right)d\mu_{R}\left(\omega\right).
\]
\item Moreover the assignment $x\mapsto e_{x}$ is quadratic in the following
sense: for all $x,y\in H$ one has 
\[
e_{x+y}+e_{x-y}=2\left(e_{x}+e_{y}\right)\qquad\text{in }L^{1}\left(\partial\mathcal{T},\mu_{R}\right).
\]
\end{enumerate}
\end{thm}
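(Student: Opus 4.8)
The plan is to produce $\mu_R$ by extension from cylinder masses, to realize each $e_x$ as a Radon--Nikodym derivative of a second cylinder measure, and to deduce the parallelogram identity from the corresponding identity for the quadratic forms. For part (1), I would first record that $w\mapsto\mathrm{tr}(C_w(R))$ is a nonnegative additive weight on $\mathcal{T}$: by \eqref{eq:2-3} and linearity of the trace, $\mathrm{tr}(C_w(R))=\sum_{v\in\mathrm{ch}(w)}\mathrm{tr}(C_v(R))$, with root value $\mathrm{tr}(R)<\infty$. Declaring $\mu_R([w]):=\mathrm{tr}(C_w(R))$ and extending additively to finite disjoint unions of cylinders gives a well-defined finitely additive premeasure on the algebra $\mathcal{A}$ of clopen subsets of $\partial\mathcal{T}$; well-definedness follows by refining any two cylinder representations to a common depth and telescoping. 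Because the branching is finite, every level of $\mathcal{T}$ is finite and $\partial\mathcal{T}$ is compact, and since cylinders are clopen, any countable disjoint cylinder cover of a cylinder is necessarily finite, so the premeasure is automatically countably additive on $\mathcal{A}$. Carath\'eodory's extension theorem then yields a Borel measure, and uniqueness follows because the cylinders form a $\pi$-system (the intersection of two cylinders is a cylinder or empty) generating the Borel $\sigma$-algebra and containing $\partial\mathcal{T}$, the cylinder at the root; evaluating there gives $\mu_R(\partial\mathcal{T})=\mathrm{tr}(R)$.

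For part (2), fix $x$ and repeat the construction with the weight $w\mapsto\langle x,C_w(R)x\rangle=\|P_wR^{1/2}x\|^2$, which is again additive over children since the projections $\{P_v\}_{v\in\mathrm{ch}(w)}$ are mutually orthogonal with sum $P_w$. This produces a finite Borel measure $\mu_R^x$ with $\mu_R^x([w])=\langle x,C_w(R)x\rangle$. The crucial step is to compare $\mu_R^x$ with $\mu_R$: writing $T_w=P_wR^{1/2}$, which is Hilbert--Schmidt since $R\in\mathcal{S}_1$ forces $R^{1/2}\in\mathcal{S}_2$, I get $\langle x,C_w(R)x\rangle=\|T_wx\|^2\le\|T_w\|_{\mathrm{op}}^2\|x\|^2\le\|T_w\|_{\mathcal{S}_2}^2\|x\|^2=\mathrm{tr}(C_w(R))\,\|x\|^2$, that is $\mu_R^x([w])\le\|x\|^2\mu_R([w])$ on every cylinder. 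The signed measure $\|x\|^2\mu_R-\mu_R^x$ is then nonnegative on $\mathcal{A}$, hence on open sets (countable disjoint unions of cylinders), hence on all Borel sets by regularity of finite measures on a compact metric space; thus $\mu_R^x\le\|x\|^2\mu_R$, and in particular $\mu_R^x\ll\mu_R$. Radon--Nikodym then delivers $e_x=d\mu_R^x/d\mu_R$ with $0\le e_x\le\|x\|^2$ a.e., so $e_x\in L^1(\mu_R)$ and $\int_{[w]}e_x\,d\mu_R=\mu_R^x([w])=\langle x,C_w(R)x\rangle$; taking $w$ the root gives the stated formula for $\langle x,Rx\rangle$.

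For part (3), I would fix $w$ and apply the parallelogram law $\|a+b\|^2+\|a-b\|^2=2\|a\|^2+2\|b\|^2$ with $a=P_wR^{1/2}x$ and $b=P_wR^{1/2}y$, which gives $\langle x+y,C_w(R)(x+y)\rangle+\langle x-y,C_w(R)(x-y)\rangle=2\langle x,C_w(R)x\rangle+2\langle y,C_w(R)y\rangle$. Translating through part (2), the two functions $e_{x+y}+e_{x-y}$ and $2(e_x+e_y)$ in $L^1(\mu_R)$ have equal integrals over every cylinder $[w]$. Since the finite measures they define are both $\ll\mu_R$ and agree on the generating $\pi$-system of cylinders (which contains $\partial\mathcal{T}$), the uniqueness theorem for measures forces agreement on all Borel sets, so the densities coincide $\mu_R$-almost everywhere, which is precisely the asserted identity in $L^1(\partial\mathcal{T},\mu_R)$.

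The main obstacle I anticipate is the extension step in part (1): upgrading the additive trace weight to a genuinely countably additive Borel measure. The additivity itself is immediate from the telescoping \eqref{eq:2-3}, but countable additivity of a premeasure is not automatic in general and here rests entirely on the compactness of $\partial\mathcal{T}$, which is exactly what the finite-branching hypothesis secures. Once the measure exists, the remainder is comparatively routine; the one clean observation is the operator-norm--versus--Hilbert--Schmidt-norm inequality, which yields absolute continuity (indeed a bounded density) in part (2) without any recourse to a martingale-convergence argument.
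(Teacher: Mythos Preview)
Your argument is correct and follows the same overall architecture as the paper's proof: build $\mu_R$ by extending the additive cylinder weights, build a second measure from the quadratic forms $\langle x,C_w(R)x\rangle$, compare, and invoke Radon--Nikodym; part (3) is the parallelogram identity pushed through uniqueness of measures. Two places where you diverge are worth noting. First, in part (1) you are more explicit than the paper about \emph{why} the premeasure is countably additive: you use that finite branching makes $\partial\mathcal{T}$ compact with clopen cylinders, so any countable disjoint cylinder cover of a cylinder is finite. The paper simply invokes Kolmogorov/Carath\'eodory; your version is cleaner and makes the role of the finite-branching hypothesis transparent. Second, and more substantively, your absolute continuity step in part (2) is different from the paper's. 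The paper argues qualitatively: if $\mathrm{tr}(C_w(R))=0$ then $C_w(R)=0$ (positive trace-class with zero trace), hence $\langle x,C_w(R)x\rangle=0$; this gives $\nu_x\ll\mu_R$ but no control on the density. You instead use the quantitative inequality $\langle x,C_w(R)x\rangle=\|P_wR^{1/2}x\|^2\le\|P_wR^{1/2}\|_{\mathcal{S}_2}^2\|x\|^2=\mathrm{tr}(C_w(R))\,\|x\|^2$, which yields the stronger conclusion $0\le e_x\le\|x\|^2$ $\mu_R$-a.e.\ and in particular $e_x\in L^\infty(\mu_R)$, not merely $L^1$. Both routes are valid; yours buys a uniform bound on the density at no extra cost.
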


\begin{proof}
We first construct the boundary measure $\mu_{R}$. For each node
$w\in\mathcal{T}$ define 
\[
\mu^{0}_{R}\left(\left[w\right]\right)=\mathrm{tr}\left(C_{w}\left(R\right)\right).
\]
At the root we have $C_{\emptyset}\left(R\right)=R$ and hence $\mu^{0}_{R}\left(\partial\mathcal{T}\right)=\mu^{0}_{R}\left(\left[\emptyset\right]\right)=\mathrm{tr}\left(R\right)<\infty$.
The splitting identity $C_{w}\left(R\right)=\sum_{v\in\mathrm{ch}(w)}C_{v}\left(R\right)$
and linearity of the trace give 
\[
\mu^{0}_{R}\left(\left[w\right]\right)=\mathrm{tr}\left(C_{w}\left(R\right)\right)=\sum_{v\in\mathrm{ch}(w)}\mathrm{tr}\left(C_{v}\left(R\right)\right)=\sum_{v\in\mathrm{ch}(w)}\mu^{0}_{R}\left(\left[v\right]\right)
\]
for every $w$. Thus the cylinder weights $\mu^{0}_{R}\left(\left[w\right]\right)$
are consistent under refinement along children.

Let $\mathcal{C}$ be the collection of all cylinder sets $\left[w\right]$,
and let $\Sigma$ be the $\sigma$-algebra generated by $\mathcal{C}$.
Any set $A$ which is a finite union of pairwise disjoint cylinders
has a well-defined weight 
\[
\mu^{0}_{R}\left(A\right)=\sum_{\left[w\right]\subset A}\mu^{0}_{R}\left(\left[w\right]\right),
\]
and by consistency this does not depend on the particular choice of
level at which $A$ is written. In this way $\mu^{0}_{R}$ becomes
a finitely additive nonnegative set function on the algebra generated
by $\mathcal{C}$, with total mass $\mu^{0}_{R}(\partial\mathcal{T})=\mathrm{tr}(R)$.

By the standard Kolmogorov/Carathéodory extension there is a unique
finite Borel measure $\mu_{R}$ on $\left(\partial\mathcal{T},\Sigma\right)$
which extends $\mu^{0}_{R}$. This gives the first part of the statement.

We next fix $x\in H$ and construct the corresponding energy measure
and density. For each node $w\in\mathcal{T}$ put 
\[
\nu^{0}_{x}\left(\left[w\right]\right)=\left\langle x,C_{w}\left(R\right)x\right\rangle .
\]
Positivity of $C_{w}\left(R\right)$ implies $\nu^{0}_{x}\left(\left[w\right]\right)\ge0$
for every $w$, and at the root 
\[
\nu^{0}_{x}\left(\left[\emptyset\right]\right)=\left\langle x,C_{\emptyset}\left(R\right)x\right\rangle =\left\langle x,Rx\right\rangle <\infty.
\]
Using again $C_{w}(R)=\sum_{v\in\mathrm{ch}(w)}C_{v}(R)$, we obtain
\[
\nu^{0}_{x}\left(\left[w\right]\right)=\left\langle x,C_{w}(R)x\right\rangle =\sum_{v\in\mathrm{ch}(w)}\left\langle x,C_{v}(R)x\right\rangle =\sum_{v\in\mathrm{ch}(w)}\nu^{0}_{x}(\left[v\right])
\]
for every $w\in\mathcal{T}$. Thus the cylinder weights $\nu^{0}_{x}\left(\left[w\right]\right)$
are also consistent. Extending by finite additivity as before gives
a finitely additive set function $\nu^{0}_{x}$ on the same algebra
as $\mu^{0}_{R}$, and this extends uniquely to a finite Borel measure
$\nu_{x}$ on $\partial\mathcal{T}$ with total mass 
\[
\nu_{x}\left(\partial\mathcal{T}\right)=\nu^{0}_{x}\left(\left[\emptyset\right]\right)=\left\langle x,Rx\right\rangle .
\]

We claim that $\nu_{x}$ is absolutely continuous with respect to
$\mu_{R}$. It is enough to check this on cylinders. Suppose $\mu^{0}_{R}\left(\left[w\right]\right)=0$
for some node $w$. Then $\mathrm{tr}\left(C_{w}\left(R\right)\right)=0$.
Since $C_{w}\left(R\right)\in\mathcal{S}_{1}$ and $C_{w}\left(R\right)\ge0$,
the only way for the trace to vanish is that $C_{w}\left(R\right)\equiv0$.
Consequently 
\[
\nu^{0}_{x}\left(\left[w\right]\right)=\left\langle x,C_{w}\left(R\right)x\right\rangle =0
\]
for every $x$. By finite additivity the same implication holds for
any finite union of cylinders, and by passage to the limit it holds
for all sets in the $\sigma$-algebra generated by cylinders. In other
words $\nu^{0}_{x}\ll\mu^{0}_{R}$ on the cylinder algebra, and therefore
$\nu_{x}\ll\mu_{R}$ on $\left(\partial\mathcal{T},\Sigma\right)$.

Then there exists a unique Radon-Nikodym derivative $e_{x}\in L^{1}(\partial\mathcal{T},\mu_{R})$,
with $e_{x}\ge0$ $\mu_{R}$-a.e., such that 
\[
\nu_{x}\left(A\right)=\int_{A}e_{x}\left(\omega\right)d\mu_{R}\left(\omega\right)
\]
for every Borel set $A\subset\partial\mathcal{T}$. In particular,
for each cylinder $\left[w\right]$ we have 
\[
\left\langle x,C_{w}\left(R\right)x\right\rangle =\nu_{x}\left(\left[w\right]\right)=\int_{\left[w\right]}e_{x}\left(\omega\right)d\mu_{R}\left(\omega\right),
\]
and taking $A=\partial\mathcal{T}$ gives 
\[
\left\langle x,Rx\right\rangle =\nu_{x}\left(\partial\mathcal{T}\right)=\int_{\partial\mathcal{T}}e_{x}\left(\omega\right)d\mu_{R}\left(\omega\right).
\]

It remains to verify the quadratic property. Let $x,y\in H$. For
each node $w$, linearity and selfadjointness of $C_{w}(R)$ give
the familiar parallelogram identity 
\begin{multline*}
\left\langle x+y,C_{w}\left(R\right)\left(x+y\right)\right\rangle +\left\langle x-y,C_{w}\left(R\right)\left(x-y\right)\right\rangle \\
=2\left(\left\langle x,C_{w}\left(R\right)x\right\rangle +\left\langle y,C_{w}\left(R\right)y\right\rangle \right).
\end{multline*}
In terms of the cylinder weights of the measures $\nu^{0}_{z}$ this
says 
\[
\nu^{0}_{x+y}\left(\left[w\right]\right)+\nu^{0}_{x-y}\left(\left[w\right]\right)=2\left(\nu^{0}_{x}\left(\left[w\right]\right)+\nu^{0}_{y}\left(\left[w\right]\right)\right)
\]
for every $w\in\mathcal{T}$. By finite additivity this identity holds
for all sets in the cylinder algebra, and hence for all Borel sets
in $\partial\mathcal{T}$ after passing to the measure extensions.
Thus 
\[
\nu_{x+y}\left(A\right)+\nu_{x-y}\left(A\right)=2\left(\nu_{x}\left(A\right)+\nu_{y}\left(A\right)\right)
\]
for every Borel $A\subset\partial\mathcal{T}$.

Now write everything in terms of the Radon-Nikodym densities: 
\[
\int_{A}\left(e_{x+y}+e_{x-y}-2e_{x}-2e_{y}\right)d\mu_{R}=0
\]
for every Borel set $A$. Since $\mu_{R}$ is finite and $A$ is arbitrary,
this implies 
\[
e_{x+y}+e_{x-y}=2\left(e_{x}+e_{y}\right)
\]
in $L^{1}\left(\partial\mathcal{T},\mu_{R}\right)$, as claimed.
\end{proof}

\begin{rem*}
Formally, the family $\{C_{w}(R)\}$ may be viewed as the values of
a positive operator-valued measure on the packet boundary, obtained
by compressing a projection-valued measure $E$ by $R^{1/2}$. We
will not develop this here.
\end{rem*}
The densities $e_{x}$ from \prettyref{thm:3-1} describe the terminal
boundary distribution of the quadratic data $\left\langle x,C_{w}\left(R\right)x\right\rangle $.
At finite depth, the corresponding packet data are obtained by normalizing
these quadratic quantities by the cylinder masses $\mu_{R}\left(\left[w\right]\right)$.
This produces a piecewise constant function on the depth $n$ cylinder
partition of $\partial\mathcal{T}$. The next theorem shows that these
finite depth packet functions form a martingale and converge to the
boundary density $e_{x}$. Thus $e_{x}$ is recovered as the limit
of the normalized depth $n$ packet data.

More concretely, the multiresolution aspect comes from the depth filtration
on the tree. Each depth $n$ gives a partition of $\partial\mathcal{T}$
into cylinders $\left[w\right]$, $\left|w\right|=n$, and the functions
$E^{x}_{n}$ record the normalized packet data at that resolution.
Passing from depth $n$ to depth $n+1$ refines the cylinder partition,
so the boundary density $e_{x}$ is obtained by following these finer
and finer packet descriptions out to the boundary.
\begin{thm}
\label{thm:3-2} Let $R\in\mathcal{S}_{1}\cap B\left(H\right)_{+}$,
let $\mu_{R}$ be the boundary measure from \prettyref{thm:3-1},
and for each $n\ge0$ let $\mathcal{F}_{n}$ be the $\sigma$-algebra
generated by the cylinders $\left[w\right]$ with $\left|w\right|=n$.
For each $x\in H$, define the $\mathcal{F}_{n}$-measurable function
$E^{x}_{n}$ on $\partial\mathcal{T}$ by 
\[
E^{x}_{n}\left(\omega\right)=\frac{\left\langle x,C_{w}\left(R\right)x\right\rangle }{\mu_{R}\left(\left[w\right]\right)}\qquad\text{for }\omega\in\left[w\right],\ \left|w\right|=n,
\]
with the convention that $E^{x}_{n}\left(\omega\right)=0$ on $\left[w\right]$
whenever $\mu_{R}\left(\left[w\right]\right)=0$. Then 
\begin{enumerate}
\item For every node $w$ with $\left|w\right|=n$, 
\[
\int_{\left[w\right]}E^{x}_{n}\left(\omega\right)d\mu_{R}\left(\omega\right)=\left\langle x,C_{w}\left(R\right)x\right\rangle .
\]
In particular, $E^{x}_{n}\in L^{1}\left(\partial\mathcal{T},\mu_{R}\right)$
and 
\[
\int_{\partial\mathcal{T}}E^{x}_{n}\left(\omega\right)d\mu_{R}\left(\omega\right)=\left\langle x,Rx\right\rangle .
\]
\item $\left(E^{x}_{n}\right)_{n\ge0}$ is a nonnegative martingale with
respect to the filtration $\left(\mathcal{F}_{n}\right)_{n\ge0}$. 
\item If $e_{x}$ is the density from \prettyref{thm:3-1}, then 
\[
E^{x}_{n}\to e_{x}
\]
almost everywhere and in $L^{1}\left(\partial\mathcal{T},\mu_{R}\right)$
as $n\to\infty$. 
\end{enumerate}
\end{thm}

\begin{proof}
For each $n$, the function $E^{x}_{n}$ is constant on every cylinder
$\left[w\right]$ with $\left|w\right|=n$, hence is $\mathcal{F}_{n}$-measurable.
If $\mu_{R}\left(\left[w\right]\right)=0$, then by \prettyref{thm:3-1}
one also has 
\[
\left\langle x,C_{w}\left(R\right)x\right\rangle =\int_{\left[w\right]}e_{x}\left(\omega\right)d\mu_{R}\left(\omega\right)=0,
\]
so the stated identity on $\left[w\right]$ is still valid. If $\mu_{R}\left(\left[w\right]\right)>0$,
then 
\[
\int_{\left[w\right]}E^{x}_{n}\left(\omega\right)d\mu_{R}\left(\omega\right)=\frac{\left\langle x,C_{w}\left(R\right)x\right\rangle }{\mu_{R}\left(\left[w\right]\right)}\mu_{R}\left(\left[w\right]\right)=\left\langle x,C_{w}\left(R\right)x\right\rangle .
\]
This proves (1), and summing over all cylinders at depth $n$ gives
\[
\int_{\partial\mathcal{T}}E^{x}_{n}\left(\omega\right)d\mu_{R}\left(\omega\right)=\sum_{\left|w\right|=n}\left\langle x,C_{w}\left(R\right)x\right\rangle =\left\langle x,Rx\right\rangle .
\]

We next prove the martingale property. Fix $n\ge0$ and let $w$ be
a node with $\left|w\right|=n$. Since $E^{x}_{n+1}$ is constant
on each child cylinder $\left[v\right]$, one has 
\[
\int_{\left[w\right]}E^{x}_{n+1}\left(\omega\right)d\mu_{R}\left(\omega\right)=\sum_{v\in\mathrm{ch}\left(w\right)}\int_{\left[v\right]}E^{x}_{n+1}\left(\omega\right)d\mu_{R}\left(\omega\right).
\]
By part (1), this equals 
\[
\sum_{v\in\mathrm{ch}\left(w\right)}\left\langle x,C_{v}\left(R\right)x\right\rangle =\left\langle x,C_{w}\left(R\right)x\right\rangle =\int_{\left[w\right]}E^{x}_{n}\left(\omega\right)d\mu_{R}\left(\omega\right),
\]
where we used the refinement identity \prettyref{eq:2-3}. Since both
$E^{x}_{n}$ and $\mathbb{E}\left(E^{x}_{n+1}\mid\mathcal{F}_{n}\right)$
are $\mathcal{F}_{n}$-measurable and have the same integral over
each atom $\left[w\right]$ of $\mathcal{F}_{n}$, they agree $\mu_{R}$-almost
everywhere. Thus 
\[
\mathbb{E}\left(E^{x}_{n+1}\mid\mathcal{F}_{n}\right)=E^{x}_{n},
\]
so $\left(E^{x}_{n}\right)_{n\ge0}$ is a nonnegative martingale.

We now identify $E^{x}_{n}$ as the conditional expectation of $e_{x}$.
For every node $w$ with $\left|w\right|=n$, part (1) and \prettyref{thm:3-1}
give 
\[
\int_{\left[w\right]}E^{x}_{n}\left(\omega\right)d\mu_{R}\left(\omega\right)=\left\langle x,C_{w}\left(R\right)x\right\rangle =\int_{\left[w\right]}e_{x}\left(\omega\right)d\mu_{R}\left(\omega\right).
\]
Since the cylinders $\left[w\right]$ with $\left|w\right|=n$ are
the atoms of $\mathcal{F}_{n}$, it follows that 
\[
\mathbb{E}\left(e_{x}\mid\mathcal{F}_{n}\right)=E^{x}_{n}.
\]
Therefore, by the martingale convergence theorem for conditional expectations,
\[
E^{x}_{n}\to e_{x}
\]
almost everywhere and in $L^{1}\left(\partial\mathcal{T},\mu_{R}\right)$
as $n\to\infty$. This proves (3). 
\end{proof}

Thus the depth $n$ packet functions $E^{x}_{n}$ provide finite-resolution
approximations to the boundary density $e_{x}$. In particular, the
trace weights $\mu_{R}\left(\left[w\right]\right)$ describe the distribution
of boundary mass across the depth $n$ cylinders. This boundary picture
will be used in \prettyref{sec:4}, where the fixed depth trace greedy
rule amounts to selecting the heaviest cylinder at a given level.

\section{Trace class greedy extraction}\label{sec:4}

In this section we fix a depth $n$ and study the trace weights $\mathrm{tr}\left(C_{w}\left(R\right)\right)$
under repeated removal of the largest depth $n$ block. By \prettyref{thm:3-1},
these trace weights define the boundary measure $\mu_{R}$ on $\partial\mathcal{T}$,
and by \prettyref{thm:3-2} the depth $n$ packet data give the corresponding
finite-resolution boundary description at that level. From this point
of view, the depth $n$ trace greedy rule simply removes the heaviest
cylinder in the depth $n$ partition. The main point of the section
is that this fixed depth rule yields a clean geometric decay estimate
for the trace of the remainder, with a rate depending only on the
number $N_{n}$ of packets at depth $n$.

We assume throughout this section that $R\in\mathcal{S}_{1}\cap B\left(H\right)_{+}$,
so that all traces are finite.

For each depth $n\ge1$ we write 
\[
\mathcal{T}_{n}=\left\{ w\in\mathcal{T}:\left|w\right|=n\right\} ,\qquad N_{n}=\#\mathcal{T}_{n}.
\]
At depth $n$ we have the orthogonal decomposition 
\[
I=\sum_{w\in\mathcal{T}_{n}}P_{w},
\]
and therefore for every $A\in B\left(H\right)_{+}$, 
\begin{equation}
A=\sum_{w\in\mathcal{T}_{n}}C_{w}\left(A\right).\label{eq:4-1}
\end{equation}

We now fix a depth $n\ge1$ once and for all, and we look at a greedy
extraction scheme that only ever uses packets at this single level.
The idea is to start from $R$ and, at each step, peel off one content
block $C_{w}\left(R^{(k-1)}\right)$ at depth $n$ which has maximal
trace among all packets at that depth.

We define the depth $n$ greedy extraction sequence starting from
$R$ by 
\[
R^{(0)}=R.
\]
Given $R^{(k-1)}$ at step $k\ge1$, we choose a node $w_{k}\in\mathcal{T}_{n}$
such that 
\[
\mathrm{tr}(C_{w_{k}}(R^{(k-1)}))=\max_{w\in\mathcal{T}_{n}}\mathrm{tr}(C_{w}(R^{(k-1)})),
\]
define the extracted piece 
\[
D_{k}=C_{w_{k}}(R^{(k-1)}),
\]
and update 
\[
R^{(k)}=R^{(k-1)}-D_{k}.
\]
The maximum exists at each step because $\mathcal{T}_{n}$ is finite
and each $\mathrm{tr}\left(C_{w}\left(R^{(k-1)}\right)\right)$ is
nonnegative. By \prettyref{lem:2-2}, the sequence $\left(R^{(k)}\right)_{k\ge0}$
decreases in the Loewner order, converges strongly to a limit $R^{(\infty)}\in B\left(H\right)_{+}$,
and for every $N\ge1$, \eqref{eq:2-4-1} holds.

The construction is now the same as the sequential extraction process
from \prettyref{sec:2}, except that the node choices are restricted
to a fixed depth. The next theorem shows that this restriction leads
to a geometric decay estimate in trace norm, with rate depending only
on $N_{n}$.
\begin{thm}
\label{thm:4-1} Let $R\in\mathcal{S}_{1}\cap B\left(H\right)_{+}$,
fix a depth $n\ge1$, and let $\left(R^{(k)},D_{k}\right)_{k\ge1}$
be the depth $n$ greedy extraction sequence defined above. Then for
every $k\ge0$ we have 
\[
\mathrm{tr}(R^{(k)})\le\left(1-\frac{1}{N_{n}}\right)^{k}\mathrm{tr}\left(R\right).
\]
In particular $R^{(k)}\to0$ in trace norm as $k\to\infty$, and 
\[
R=\sum^{\infty}_{k=1}D_{k}
\]
with convergence in $\mathcal{S}_{1}$. More precisely, for each $m\ge1$,
\[
\left\Vert R-\sum\nolimits^{m}_{k=1}D_{k}\right\Vert _{1}=\Vert R^{(m)}\Vert_{1}\le\left(1-\frac{1}{N_{n}}\right)^{m}\mathrm{tr}\left(R\right).
\]
\end{thm}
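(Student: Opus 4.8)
The plan is to reduce everything to a single scalar averaging inequality at each step and then iterate. First I would record the bookkeeping: since $0\le R^{(k)}\le R$ in the Loewner order by \prettyref{lem:2-2} and $R\in\mathcal{S}_1$, every $R^{(k)}$ is again trace class, and $D_k=R^{(k-1)}-R^{(k)}$ is a difference of trace-class operators, so all traces below are finite and the trace is additive on the pieces. Applying the depth-$n$ decomposition \eqref{eq:4-1} to $A=R^{(k-1)}$ and taking traces gives
\[
\mathrm{tr}\bigl(R^{(k-1)}\bigr)=\sum_{w\in\mathcal{T}_n}\mathrm{tr}\bigl(C_w(R^{(k-1)})\bigr),
\]
a sum of $N_n$ nonnegative terms.

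The core step is a pigeonhole (max $\ge$ average) inequality. Because the greedy rule selects $w_k$ to maximize the trace weight among the $N_n$ blocks at depth $n$, the extracted mass dominates the average:
\[
\mathrm{tr}(D_k)=\max_{w\in\mathcal{T}_n}\mathrm{tr}\bigl(C_w(R^{(k-1)})\bigr)\ge\frac{1}{N_n}\,\mathrm{tr}\bigl(R^{(k-1)}\bigr).
\]
Since $R^{(k)}=R^{(k-1)}-D_k$ and the trace is linear on $\mathcal{S}_1$, this yields the one-step contraction
\[
\mathrm{tr}\bigl(R^{(k)}\bigr)=\mathrm{tr}\bigl(R^{(k-1)}\bigr)-\mathrm{tr}(D_k)\le\Bigl(1-\tfrac{1}{N_n}\Bigr)\mathrm{tr}\bigl(R^{(k-1)}\bigr).
\]
A trivial induction on $k$, with base case $R^{(0)}=R$, then gives the claimed bound $\mathrm{tr}(R^{(k)})\le(1-1/N_n)^k\,\mathrm{tr}(R)$.

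For the norm statements I would use that $R^{(k)}\ge0$ forces $\|R^{(k)}\|_1=\mathrm{tr}(R^{(k)})$, so the geometric bound already gives $R^{(k)}\to0$ in trace norm since $0\le 1-1/N_n<1$ (here $N_n\ge1$, the value $N_n=1$ being the degenerate single-packet case $P_{w}=I$, where the remainder vanishes after one step). Finally, the telescoping identity \eqref{eq:2-4-1} from \prettyref{lem:2-2} reads $R-\sum_{k=1}^m D_k=R^{(m)}$, whence
\[
\Bigl\|R-\sum\nolimits_{k=1}^m D_k\Bigr\|_1=\|R^{(m)}\|_1\le\Bigl(1-\tfrac{1}{N_n}\Bigr)^m\mathrm{tr}(R),
\]
which tends to $0$ and so shows $\sum_{k\ge1}D_k=R$ with convergence in $\mathcal{S}_1$.

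There is no deep obstacle here; the only points requiring care are bookkeeping ones. I would be sure to justify that each $R^{(k)}$ and each $D_k$ lies in $\mathcal{S}_1$, so that traces split additively and the trace norm coincides with the trace on positive operators, and to confirm that the \emph{maximal} block weight is genuinely $\ge$ the average rather than merely $\ge$ some block — this is precisely where the greedy selection, rather than an arbitrary choice of $w_k$, enters. Everything else is the standard ``geometric decay from a uniform per-step contraction'' argument.
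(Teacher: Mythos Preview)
Your proposal is correct and follows essentially the same route as the paper: apply the depth-$n$ decomposition \eqref{eq:4-1} to $R^{(k-1)}$, take traces, use max $\ge$ average to get the one-step contraction $\mathrm{tr}(R^{(k)})\le(1-1/N_n)\,\mathrm{tr}(R^{(k-1)})$, iterate, and then invoke $\|R^{(k)}\|_1=\mathrm{tr}(R^{(k)})$ together with the telescoping identity \eqref{eq:2-4-1}. Your additional bookkeeping remarks (trace-class membership of each $R^{(k)}$ and $D_k$, the degenerate $N_n=1$ case) are correct and slightly more explicit than the paper, but the argument is the same.
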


\begin{proof}
Fix $k\ge1$. Apply \eqref{eq:4-1} to $R^{(k-1)}$ to get 
\[
R^{(k-1)}=\sum_{w\in\mathcal{T}_{n}}C_{w}(R^{(k-1)}).
\]
Taking traces and using linearity of $\mathrm{tr}$ on $\mathcal{S}_{1}$,
we obtain 
\[
\mathrm{tr}(R^{(k-1)})=\sum_{w\in\mathcal{T}_{n}}\mathrm{tr}(C_{w}(R^{(k-1)})).
\]
The right-hand side is a sum of $N_{n}$ nonnegative numbers. Therefore
the maximal term is at least the average, so 
\[
\mathrm{tr}\left(D_{k}\right)=\max_{w\in\mathcal{T}_{n}}\mathrm{tr}(C_{w}(R^{(k-1)}))\ge\frac{1}{N_{n}}\mathrm{tr}(R^{(k-1)}).
\]
On the other hand, by definition of $R^{(k)}$ and positivity, 
\begin{align*}
\mathrm{tr}(R^{(k)}) & =\mathrm{tr}(R^{(k-1)})-\mathrm{tr}\left(D_{k}\right)\le\left(1-\frac{1}{N_{n}}\right)\mathrm{tr}(R^{(k-1)}).
\end{align*}
This proves the one-step inequality 
\[
\mathrm{tr}(R^{(k)})\le\left(1-\frac{1}{N_{n}}\right)\mathrm{tr}(R^{(k-1)})
\]
for every $k\ge1$. Iterating in $k$ yields 
\[
\mathrm{tr}(R^{(k)})\le\left(1-\frac{1}{N_{n}}\right)^{k}\mathrm{tr}\left(R^{(0)}\right)=\left(1-\frac{1}{N_{n}}\right)^{k}\mathrm{tr}\left(R\right),
\]
as claimed.

Since each $R^{(k)}$ is positive, its trace norm agrees with its
trace $\Vert R^{(k)}\Vert_{1}=\mathrm{tr}\left((R^{(k)})\right)$.
The estimate above therefore shows that $\Vert R^{(k)}\Vert_{1}\to0$
as $k\to\infty$, i.e., $R^{(k)}\to0$ in $\mathcal{S}_{1}$.

Using \eqref{eq:2-4-1}, we get 
\[
\left\Vert R-\sum\nolimits^{m}_{k=1}D_{k}\right\Vert _{1}=\Vert R^{(m)}\Vert_{1}=\mathrm{tr}(R^{(m)})\le\left(1-\frac{1}{N_{n}}\right)^{m}\mathrm{tr}\left(R\right).
\]
The right-hand side tends to zero as $m\to\infty$, so $\sum_{k\ge1}D_{k}$
converges to $R$ in trace norm. 
\end{proof}

\begin{rem}
\label{rem:4-2} The boundary formulation from \prettyref{thm:3-1}
gives a convenient description of the depth $n$ greedy rule. Since
each $R^{(k)}$ is positive trace class, it has an associated finite
Borel measure $\mu_{R^{(k)}}$ on $\partial\mathcal{T}$ satisfying
\[
\mu_{R^{(k)}}\left(\left[w\right]\right)=\mathrm{tr}(C_{w}(R^{(k)}))\qquad\text{for every }w\in\mathcal{T},
\]
and in particular 
\[
\mu_{R^{(k)}}\left(\partial\mathcal{T}\right)=\mathrm{tr}(R^{(k)}).
\]
Thus, at step $k$, the depth $n$ greedy choice is precisely 
\[
w_{k}\in\arg\max_{\left|w\right|=n}\mu_{R^{(k-1)}}\left(\left[w\right]\right),
\]
and the extracted mass is 
\[
\mathrm{tr}\left(D_{k}\right)=\mu_{R^{(k-1)}}\left(\left[w_{k}\right]\right).
\]
Accordingly, \prettyref{thm:4-1} may be read as a geometric decay
statement for the total boundary mass under the rule that, at each
step, removes the heaviest depth $n$ cylinder. 
\end{rem}

The estimate in \prettyref{thm:4-1} is a worst case bound depending
only on the number $N_{n}$ of depth $n$ packets. It is natural to
ask whether the factor $1-\frac{1}{N_{n}}$ can be improved at this
level of generality. The next proposition shows that it cannot: the
bound in \prettyref{thm:4-1} is sharp among geometric decay estimates
whose rate depends only on $N_{n}$.
\begin{prop}
\label{prop:4-3} The rate in \prettyref{thm:4-1} is optimal in general.
More precisely, for every depth $n\ge1$ there exists $R\in\mathcal{S}_{1}\cap B\left(H\right)_{+}$
such that the associated depth $n$ greedy extraction sequence satisfies
\[
R^{(k)}=\left(1-\frac{1}{N_{n}}\right)^{k}R
\]
for all $k\ge0$. Consequently, 
\[
\mathrm{tr}\left(R^{(k)}\right)=\left(1-\frac{1}{N_{n}}\right)^{k}\mathrm{tr}\left(R\right)
\]
for all $k\ge0$, so no better universal decay factor can replace
$1-\frac{1}{N_{n}}$ in \prettyref{thm:4-1}. 
\end{prop}

\begin{proof}
For each $w\in\mathcal{T}_{n}$ choose a unit vector $e_{w}\in W_{w}$.
Since the subspaces $\left\{ W_{w}:\left|w\right|=n\right\} $ are
orthogonal, the family $\left\{ e_{w}:w\in\mathcal{T}_{n}\right\} $
is orthonormal. 

Set $u=\sum_{w\in\mathcal{T}_{n}}e_{w}$, $R=\left|u\left\rangle \right\langle u\right|$.
Then $\left\Vert u\right\Vert ^{2}=N_{n}$. Let $p=\frac{1}{N_{n}}\left|u\left\rangle \right\langle u\right|$,
so $p$ is the rank one projection onto $\mathbb{C}u$, and $R=N_{n}p$,
$R^{1/2}=\sqrt{N_{n}}p$. 

Fix $w\in\mathcal{T}_{n}$. Since $P_{w}u=e_{w}$ and $\left\langle u,P_{w}u\right\rangle =\left\Vert e_{w}\right\Vert ^{2}=1$,
we get 
\begin{align*}
pP_{w}p & =\frac{1}{N^{2}_{n}}\left|u\left\rangle \right\langle u\right|P_{w}\left|u\left\rangle \right\langle u\right|\\
 & =\frac{1}{N^{2}_{n}}\left\langle u,P_{w}u\right\rangle \left|u\left\rangle \right\langle u\right|=\frac{1}{N^{2}_{n}}\left|u\left\rangle \right\langle u\right|=\frac{1}{N_{n}}p.
\end{align*}
Hence $C_{w}\left(R\right)=R^{1/2}P_{w}R^{1/2}=N_{n}pP_{w}p=p=\frac{1}{N_{n}}R$
for every $w\in\mathcal{T}_{n}$. Thus all depth $n$ blocks have
the same trace, so every depth $n$ node is a greedy choice. Therefore
$D_{1}=\frac{1}{N_{n}}R$, $R^{(1)}=R-D_{1}=\left(1-\frac{1}{N_{n}}\right)R$. 

Now suppose $R^{(k-1)}=\left(1-\frac{1}{N_{n}}\right)^{k-1}R$. Since
$C_{w}\left(\alpha R\right)=\alpha C_{w}\left(R\right)$ for all $\alpha\ge0$,
we obtain 
\[
C_{w}\left(R^{(k-1)}\right)=\left(1-\frac{1}{N_{n}}\right)^{k-1}C_{w}\left(R\right)=\frac{1}{N_{n}}R^{(k-1)}
\]
for every $w\in\mathcal{T}_{n}$. Again every depth $n$ node is a
greedy choice, and 
\[
R^{(k)}=R^{(k-1)}-\frac{1}{N_{n}}R^{(k-1)}=\left(1-\frac{1}{N_{n}}\right)R^{(k-1)}.
\]
By induction, $R^{(k)}=\left(1-\frac{1}{N_{n}}\right)^{k}R$ for all
$k\ge0$. Taking traces gives the stated equality. 
\end{proof}

\section{Hilbert-Schmidt greedy extraction}\label{sec:5}

\prettyref{sec:4} treats the depth $n$ packet blocks only through
their trace weights. In the Hilbert-Schmidt (HS) setting, the relative
arrangement of those blocks also matters. This leads to a different
fixed depth selection rule, based on the quantities $\left\Vert C_{w}\left(A\right)\right\Vert _{2}$,
and to a depth $n$ coherence parameter $\Gamma_{n}\left(A\right)$
measuring how far $A$ is from being block diagonal with respect to
the depth $n$ packet partition. The main point of the section is
that the resulting HS greedy rule again yields geometric decay of
the remainder, with an improved one-step factor when the operator
is closer to block diagonal at depth $n$.

Throughout we assume that $R\in\mathcal{S}_{1}\cap B\left(H\right)_{+}$,
so in particular $R\in\mathcal{S}_{2}$. We define 
\[
E_{n}\left(X\right)=\sum_{w\in\mathcal{T}_{n}}P_{w}XP_{w},
\]
i.e., the conditional expectation onto the block diagonal subalgebra
determined by $\left(P_{w}\right)_{w\in\mathcal{T}_{n}}$. 

For any $A\in\mathcal{S}_{2}\cap B\left(H\right)_{+}$ we set 
\[
\Gamma_{n}\left(A\right)=\frac{\left\Vert A\right\Vert ^{2}_{2}}{\sum_{w\in\mathcal{T}_{n}}\left\Vert C_{w}\left(A\right)\right\Vert ^{2}_{2}},
\]
whenever the denominator is nonzero. This parameter will serve as
a ``depth $n$ coherence'' of $A$ relative to the packet decomposition.
\begin{lem}
\label{lem:5-1}Let $A\in\mathcal{S}_{2}\cap B\left(H\right)_{+}$.
Then
\begin{equation}
\sum_{w\in\mathcal{T}_{n}}\left\Vert C_{w}\left(A\right)\right\Vert ^{2}_{2}=\mathrm{tr}\left(E_{n}\left(A\right)A\right).\label{eq:5-1}
\end{equation}
Moreover, 
\begin{equation}
\frac{1}{N_{n}}\left\Vert A\right\Vert ^{2}_{2}\le\sum_{w\in\mathcal{T}_{n}}\left\Vert C_{w}\left(A\right)\right\Vert ^{2}_{2}\le\left\Vert A\right\Vert ^{2}_{2},\label{eq:5-2}
\end{equation}
and hence $1\le\Gamma_{n}\left(A\right)\le N_{n}$.

In particular, $\Gamma_{n}\left(A\right)=1$ if and only if $E_{n}\left(A\right)=A$,
equivalently if and only if $P_{w}A=AP_{w}$ for all $w\in\mathcal{T}_{n}$
(i.e., $A$ is block diagonal at depth $n$).
\end{lem}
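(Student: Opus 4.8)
The plan is to establish the identity \eqref{eq:5-1} first, then read off both inequalities of \eqref{eq:5-2} from it, and finally treat the equality case. For \eqref{eq:5-1}, I would start from the observation that each $C_{w}(A)=A^{1/2}P_{w}A^{1/2}$ is positive, hence self-adjoint, so $\left\Vert C_{w}(A)\right\Vert _{2}^{2}=\mathrm{tr}(C_{w}(A)^{2})$. Expanding $C_{w}(A)^{2}=A^{1/2}P_{w}AP_{w}A^{1/2}$ and using $P_{w}^{2}=P_{w}$ together with cyclicity of the trace gives $\left\Vert C_{w}(A)\right\Vert _{2}^{2}=\mathrm{tr}(P_{w}AP_{w}A)$. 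Summing the finite family over $w\in\mathcal{T}_{n}$ and pulling the sum inside the trace yields $\sum_{w}\left\Vert C_{w}(A)\right\Vert _{2}^{2}=\mathrm{tr}(E_{n}(A)A)$. The same computation, now using orthogonality $P_{w}P_{v}=\delta_{wv}P_{w}$, shows $\mathrm{tr}(E_{n}(A)A)=\mathrm{tr}(E_{n}(A)^{2})=\left\Vert E_{n}(A)\right\Vert _{2}^{2}$; the useful reformulation is therefore that the quantity of interest is exactly the squared HS-norm of the block-diagonal part of $A$. Along the way I would record the Schatten bookkeeping that keeps every trace finite: $A\in\mathcal{S}_{2}$ gives $A^{1/2}\in\mathcal{S}_{4}$, so each $C_{w}(A)\in\mathcal{S}_{2}$ and each product $P_{w}AP_{w}A$ is trace class.

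For \eqref{eq:5-2}, the structural fact I would use is that the maps $X\mapsto P_{w}XP_{v}$ are mutually orthogonal projections on the Hilbert space $\mathcal{S}_{2}$ that sum to the identity, since $\sum_{w,v}P_{w}XP_{v}=X$. Pythagoras then gives $\left\Vert A\right\Vert _{2}^{2}=\sum_{w,v}\left\Vert P_{w}AP_{v}\right\Vert _{2}^{2}$, while $\left\Vert E_{n}(A)\right\Vert _{2}^{2}=\sum_{w}\left\Vert P_{w}AP_{w}\right\Vert _{2}^{2}$ is the subsum over diagonal pairs. Dropping the off-diagonal terms yields the upper bound $\sum_{w}\left\Vert C_{w}(A)\right\Vert _{2}^{2}=\left\Vert E_{n}(A)\right\Vert _{2}^{2}\le\left\Vert A\right\Vert _{2}^{2}$. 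For the lower bound I would instead invoke the decomposition $A=\sum_{w\in\mathcal{T}_{n}}C_{w}(A)$ from \eqref{eq:4-1}: the triangle inequality and Cauchy-Schwarz in $\mathcal{S}_{2}$ give $\left\Vert A\right\Vert _{2}\le\sum_{w}\left\Vert C_{w}(A)\right\Vert _{2}\le\sqrt{N_{n}}\,(\sum_{w}\left\Vert C_{w}(A)\right\Vert _{2}^{2})^{1/2}$, and squaring produces $\tfrac{1}{N_{n}}\left\Vert A\right\Vert _{2}^{2}\le\sum_{w}\left\Vert C_{w}(A)\right\Vert _{2}^{2}$. The bounds $1\le\Gamma_{n}(A)\le N_{n}$ follow by taking reciprocals, the denominator being positive whenever $A\ne0$.

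For the equality statement, the Pythagorean identity makes everything transparent: $\Gamma_{n}(A)=1$ holds iff $\left\Vert E_{n}(A)\right\Vert _{2}^{2}=\left\Vert A\right\Vert _{2}^{2}$, iff $\left\Vert A-E_{n}(A)\right\Vert _{2}=0$, iff $E_{n}(A)=A$, i.e. iff every off-diagonal block $P_{w}AP_{v}$ with $w\ne v$ vanishes. I would then record the elementary equivalence with commutation: if $P_{w}AP_{v}=0$ for all $w\ne v$, then $AP_{w}=(\sum_{v}P_{v})AP_{w}=P_{w}AP_{w}=P_{w}A(\sum_{v}P_{v})=P_{w}A$, and conversely $AP_{w}=P_{w}A$ for all $w$ forces $P_{v}AP_{w}=AP_{v}P_{w}=0$ for $v\ne w$.

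I do not expect a serious obstacle here; the lemma is elementary once the right reformulation is in hand, and the only genuine choice is organizational. The single most useful step is rewriting \eqref{eq:5-1} as $\sum_{w}\left\Vert C_{w}(A)\right\Vert _{2}^{2}=\left\Vert E_{n}(A)\right\Vert _{2}^{2}$, since it reduces the upper bound and the entire equality analysis to the orthogonal-projection geometry of $E_{n}$ on $\mathcal{S}_{2}$. The lower bound is genuinely separate and comes from the $N_{n}$-term splitting via Cauchy-Schwarz; the place where positivity is actually used is in forming $A^{1/2}$, so that $A=\sum_{w}C_{w}(A)$ holds with positive summands. The only care needed is the Schatten-class bookkeeping ensuring finiteness of every trace.
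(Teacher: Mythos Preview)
Your proof is correct and follows essentially the same route as the paper: identity \eqref{eq:5-1} via $\mathrm{tr}(C_w(A)^2)=\mathrm{tr}(P_wAP_wA)$, the lower bound via $A=\sum_w C_w(A)$ together with Cauchy--Schwarz in $\mathcal{S}_2$, and the equality case via the orthogonal-projection property of $E_n$ on $\mathcal{S}_2$. The one organizational difference is the upper bound: the paper argues $\mathrm{tr}(E_n(A)A)\le\|E_n(A)\|_2\|A\|_2\le\|A\|_2^2$ by Cauchy--Schwarz plus $\|E_n(X)\|_2\le\|X\|_2$, whereas you first sharpen \eqref{eq:5-1} to $\mathrm{tr}(E_n(A)A)=\|E_n(A)\|_2^2$ and then use the full block Pythagoras $\|A\|_2^2=\sum_{w,v}\|P_wAP_v\|_2^2$ to drop off-diagonal terms. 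Your version is slightly more direct and makes the equality case $\Gamma_n(A)=1\Leftrightarrow P_wAP_v=0$ for $w\ne v$ immediate; the paper's version instead recovers $E_n(A)=A$ from equality in the Cauchy--Schwarz chain.
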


\begin{proof}
Note that $C_{w}\left(A\right)=A^{1/2}P_{w}A^{1/2}$, so that 
\[
C_{w}\left(A\right)^{2}=A^{1/2}P_{w}AP_{w}A^{1/2}\in\mathcal{S}_{1}\cap B\left(H\right)_{+}.
\]
Thus, 
\[
\left\Vert C_{w}\left(A\right)\right\Vert ^{2}_{2}=\mathrm{tr}\left(C_{w}\left(A\right)^{2}\right)=\mathrm{tr}\left(P_{w}AP_{w}A\right).
\]
Summing over $w\in\mathcal{T}_{n}$ gives \eqref{eq:5-1}.

View $\mathcal{S}_{2}$ as a Hilbert space with inner product $\left\langle X,Y\right\rangle _{2}=\mathrm{tr}\left(Y^{*}X\right)$.
Then $E_{n}$ is selfadjoint and idempotent on $\mathcal{S}_{2}$,
because for $X,Y\in\mathcal{S}_{2}$, 
\begin{align*}
\left\langle E_{n}\left(X\right),Y\right\rangle _{2} & =\mathrm{tr}\left(E_{n}\left(X\right)^{*}Y\right)\\
 & =\sum_{w}\mathrm{tr}\left(P_{w}X^{*}P_{w}Y\right)=\mathrm{tr}\left(X^{*}E_{n}\left(Y\right)\right)=\left\langle X,E_{n}\left(Y\right)\right\rangle _{2}.
\end{align*}
Thus $E_{n}$ is the orthogonal projection onto the subspace of block
diagonal operators, in particular 
\[
\left\Vert E_{n}\left(X\right)\right\Vert _{2}\le\left\Vert X\right\Vert _{2}
\]
for all $X\in\mathcal{S}_{2}$. Using Cauchy-Schwarz and \eqref{eq:5-1},
we get 
\begin{equation}
\sum_{w\in\mathcal{T}_{n}}\left\Vert C_{w}\left(A\right)\right\Vert ^{2}_{2}=\mathrm{tr}\left(E_{n}\left(A\right)A\right)\le\left\Vert E_{n}\left(A\right)\right\Vert _{2}\left\Vert A\right\Vert _{2}\le\left\Vert A\right\Vert ^{2}_{2}.\label{eq:5-3}
\end{equation}

For the lower bound, use \eqref{eq:4-1} in the Hilbert space $\mathcal{S}_{2}$.
Then 
\[
\left\Vert A\right\Vert ^{2}_{2}=\left\Vert \sum\nolimits_{w\in\mathcal{T}_{n}}C_{w}\left(A\right)\right\Vert ^{2}_{2}\le N_{n}\sum\nolimits_{w\in\mathcal{T}_{n}}\left\Vert C_{w}\left(A\right)\right\Vert ^{2}_{2}
\]
by the elementary inequality $\left\Vert \sum^{N}_{i=1}x_{i}\right\Vert ^{2}\le N\sum_{i}\left\Vert x_{i}\right\Vert ^{2}$.
This gives 
\begin{equation}
\sum_{w\in\mathcal{T}_{n}}\left\Vert C_{w}\left(A\right)\right\Vert ^{2}_{2}\ge\frac{1}{N_{n}}\left\Vert A\right\Vert ^{2}_{2}.\label{eq:5-4}
\end{equation}
Combining \eqref{eq:5-3}-\eqref{eq:5-4} gives \eqref{eq:5-2}, and
so $1\le\Gamma_{n}\left(A\right)\le N_{n}$.

Finally, if $E_{n}\left(A\right)=A$, then 
\[
\sum_{w\in\mathcal{T}_{n}}\left\Vert C_{w}\left(A\right)\right\Vert ^{2}_{2}=\mathrm{tr}\left(E_{n}\left(A\right)A\right)=\mathrm{tr}\left(A^{2}\right)=\left\Vert A\right\Vert ^{2}_{2},
\]
so $\Gamma_{n}\left(A\right)=1$. Conversely, if $\Gamma_{n}\left(A\right)=1$
then equality holds in the chain below:
\[
\mathrm{tr}\left(E_{n}\left(A\right)A\right)\le\left\Vert E_{n}\left(A\right)\right\Vert _{2}\left\Vert A\right\Vert _{2}\le\left\Vert A\right\Vert ^{2}_{2}.
\]
Thus $\left\Vert E_{n}\left(A\right)\right\Vert _{2}=\left\Vert A\right\Vert _{2}$,
and since $E_{n}$ is the orthogonal projection on $\mathcal{S}_{2}$,
this forces $E_{n}\left(A\right)=A$. The equivalence with commuting
with every $P_{w}$ at depth $n$ is standard. 
\end{proof}

\begin{lem}
\label{lem:5-2}Let $A,D\in\mathcal{S}_{2}\cap B\left(H\right)_{+}$
with $0\le D\le A$. Then 
\[
\left\Vert A-D\right\Vert ^{2}_{2}\le\left\Vert A\right\Vert ^{2}_{2}-\left\Vert D\right\Vert ^{2}_{2}.
\]
\end{lem}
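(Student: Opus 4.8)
The plan is to expand the Hilbert-Schmidt norm on the left-hand side as a trace and reduce the claimed inequality to a single scalar trace inequality. Since $A$ and $D$ both lie in $\mathcal{S}_2$, all of the pairwise products $A^2$, $D^2$, $AD$, $DA$ lie in $\mathcal{S}_1$, so every trace appearing below is finite and the cyclicity relation $\mathrm{tr}(AD)=\mathrm{tr}(DA)$ is available. Writing $\left\Vert A-D\right\Vert_2^2=\mathrm{tr}\big((A-D)^2\big)$ and using that $A$ and $D$ are selfadjoint, I would expand
\[
\left\Vert A-D\right\Vert_2^2=\mathrm{tr}(A^2)-\mathrm{tr}(AD)-\mathrm{tr}(DA)+\mathrm{tr}(D^2)=\left\Vert A\right\Vert_2^2-2\,\mathrm{tr}(AD)+\left\Vert D\right\Vert_2^2.
\]

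Comparing this with the target quantity $\left\Vert A\right\Vert_2^2-\left\Vert D\right\Vert_2^2$, the inequality to be proved collapses to $2\left\Vert D\right\Vert_2^2\le 2\,\mathrm{tr}(AD)$, that is, $\mathrm{tr}(D^2)\le\mathrm{tr}(AD)$, or equivalently
\[
\mathrm{tr}\big(D(A-D)\big)\ge 0.
\]
So after the expansion the entire content of the lemma is this one trace-positivity statement.

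The remaining step is to establish that positivity, and this is precisely where the hypothesis $0\le D\le A$ is used: both $D$ and $A-D$ are positive operators, and although the product of two positive operators need not itself be positive, its trace is nonnegative. Concretely I would insert the square root of $D$ and use cyclicity,
\[
\mathrm{tr}\big(D(A-D)\big)=\mathrm{tr}\big(D^{1/2}(A-D)D^{1/2}\big),
\]
and then observe that $D^{1/2}(A-D)D^{1/2}\ge 0$, since it is a two-sided compression of the positive operator $A-D$; hence its trace is nonnegative, which closes the argument.

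I expect the only conceptual point -- and hence the main obstacle, if there is one -- to be the noncommutativity of $A$ and $D$: one should not expect a naive Pythagorean identity here, and the real content of the lemma is exactly that the cross term $-2\,\mathrm{tr}(AD)$ dominates $-2\left\Vert D\right\Vert_2^2$. That domination is secured by the trace-positivity of the product of the two positive operators $D$ and $A-D$, via the cyclic-square-root trick above. Everything else is routine bookkeeping of Schatten-class membership to justify that the traces are finite and that cyclicity applies.
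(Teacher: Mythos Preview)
Your argument is correct and essentially identical to the paper's own proof: both expand $\left\Vert A-D\right\Vert_2^2$, reduce to $\mathrm{tr}(D(A-D))\ge 0$, and verify this via $\mathrm{tr}(D(A-D))=\mathrm{tr}\big(D^{1/2}(A-D)D^{1/2}\big)\ge 0$. The only cosmetic difference is that the paper names $B=A-D$ and notes explicitly that $D^{1/2}\in\mathcal{S}_4$ to justify $D^{1/2}BD^{1/2}\in\mathcal{S}_1$.
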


\begin{proof}
Write $B=A-D\ge0$. Then, 
\[
\left\Vert A-D\right\Vert ^{2}_{2}=\left\Vert A\right\Vert ^{2}_{2}+\left\Vert D\right\Vert ^{2}_{2}-2\mathrm{tr}\left(AD\right).
\]
It suffices to show $\mathrm{tr}\left(AD\right)\ge\mathrm{tr}\left(D^{2}\right)$.
We can write 
\[
\mathrm{tr}\left(AD\right)-\mathrm{tr}\left(D^{2}\right)=\mathrm{tr}\left(\left(A-D\right)D\right)=\mathrm{tr}\left(BD\right).
\]
Since $B,D\ge0$, we have $D^{1/2}BD^{1/2}\ge0$. Moreover, $D^{1/2}\in\mathcal{S}_{4}$
and $B\in\mathcal{S}_{2}$, so $D^{1/2}BD^{1/2}\in\mathcal{S}_{1}$
and its trace is nonnegative: 
\[
\mathrm{tr}\left(BD\right)=\mathrm{tr}\left(D^{1/2}BD^{1/2}\right)\ge0.
\]
This gives $\mathrm{tr}\left(AD\right)\ge\mathrm{tr}\left(D^{2}\right)$. 
\end{proof}

We now describe the HS greedy algorithm and its basic convergence
property. At depth $n$, starting from $R^{\left(0\right)}=R$, we
define inductively 
\[
w_{k}\in\mathcal{T}_{n}\quad\text{such that}\quad\Vert C_{w_{k}}(R^{\left(k-1\right)})\Vert_{2}=\max_{w\in\mathcal{T}_{n}}\left\Vert C_{w}(R^{\left(k-1\right)})\right\Vert _{2},
\]
then set 
\[
D_{k}=C_{w_{k}}(R^{\left(k-1\right)}),\qquad R^{\left(k\right)}=R^{\left(k-1\right)}-D_{k}.
\]
As in the trace greedy case, each $D_{k}$ is positive, satisfies
$0\le D_{k}\le R^{\left(k-1\right)}$, and is trace class; hence all
$R^{\left(k\right)}$ remain in $\mathcal{S}_{1}\cap B\left(H\right)_{+}$,
and in particular in $\mathcal{S}_{2}$.
\begin{thm}
\label{thm:5-3}Let $R\in\mathcal{S}_{1}\cap B\left(H\right)_{+}$,
fix $n\ge1$, and let $\left(R^{\left(k\right)}\right)$ be the depth
$n$ HS greedy sequence from above. Then for every $k\ge1$, 
\begin{equation}
\Vert R^{\left(k\right)}\Vert^{2}_{2}\le\left(1-\frac{1}{\Gamma_{n}\left(R^{\left(k-1\right)}\right)N_{n}}\right)\Vert R^{\left(k-1\right)}\Vert^{2}_{2}.\label{eq:5-5}
\end{equation}
In particular, since $\Gamma_{n}\left(R^{\left(k-1\right)}\right)\le N_{n}$,
one has the uniform estimate 
\begin{equation}
\Vert R^{\left(k\right)}\Vert^{2}_{2}\le\left(1-\frac{1}{N^{2}_{n}}\right)^{k}\left\Vert R\right\Vert ^{2}_{2},\label{eq:5-6}
\end{equation}
and therefore $R^{\left(k\right)}\to0$ in $\mathcal{S}_{2}$. 

Moreover, $R=\sum^{\infty}_{k=1}D_{k}$ with convergence in $\mathcal{S}_{2}$,
and for each $m\ge1$ 
\begin{equation}
\left\Vert R-\sum\nolimits^{m}_{k=1}D_{k}\right\Vert _{2}=\Vert R^{\left(m\right)}\Vert_{2}\le\left(1-\frac{1}{N^{2}_{n}}\right)^{m/2}\left\Vert R\right\Vert _{2}.\label{eq:5-7}
\end{equation}

Finally, if $R^{\left(k-1\right)}$ is block diagonal at depth $n$,
i.e. $\Gamma_{n}\left(R^{\left(k-1\right)}\right)=1$, then the one-step
factor improves to 
\begin{equation}
\Vert R^{\left(k\right)}\Vert^{2}_{2}\le\left(1-\frac{1}{N_{n}}\right)\Vert R^{\left(k-1\right)}\Vert^{2}_{2}.\label{eq:5-8}
\end{equation}
\end{thm}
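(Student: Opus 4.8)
The plan is to reduce everything to a single one-step inequality and then iterate, exactly in the spirit of the proof of \prettyref{thm:4-1} but with the trace replaced by the squared Hilbert-Schmidt norm and with \prettyref{lem:5-2} supplying the quantitative gain. Fix $k\ge1$ and assume $R^{(k-1)}\neq0$ (otherwise $R^{(k)}=0$ and both sides of \eqref{eq:5-5} vanish trivially, with $\Gamma_{n}$ irrelevant). Since $0\le D_{k}\le R^{(k-1)}$ and both operators are Hilbert-Schmidt, I would first apply \prettyref{lem:5-2} with $A=R^{(k-1)}$, $D=D_{k}$ to get the defect estimate
\[
\Vert R^{(k)}\Vert_{2}^{2}=\Vert R^{(k-1)}-D_{k}\Vert_{2}^{2}\le\Vert R^{(k-1)}\Vert_{2}^{2}-\Vert D_{k}\Vert_{2}^{2}.
\]
So the whole task becomes lower-bounding the extracted mass $\Vert D_{k}\Vert_{2}^{2}$.

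For that I would use the greedy choice together with a max-versus-average argument. By definition of the rule, $\Vert D_{k}\Vert_{2}^{2}=\max_{w\in\mathcal{T}_{n}}\Vert C_{w}(R^{(k-1)})\Vert_{2}^{2}$, and a maximum of $N_{n}$ nonnegative numbers is at least their average, so
\[
\Vert D_{k}\Vert_{2}^{2}\ge\frac{1}{N_{n}}\sum_{w\in\mathcal{T}_{n}}\Vert C_{w}(R^{(k-1)})\Vert_{2}^{2}.
\]
I would then invoke the definition of the coherence parameter to rewrite the sum as $\sum_{w}\Vert C_{w}(R^{(k-1)})\Vert_{2}^{2}=\Vert R^{(k-1)}\Vert_{2}^{2}/\Gamma_{n}(R^{(k-1)})$, the denominator being nonzero by the lower bound in \prettyref{lem:5-1} since $R^{(k-1)}\neq0$. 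Chaining the three displays produces the one-step factor $1-1/(\Gamma_{n}(R^{(k-1)})N_{n})$, which is \eqref{eq:5-5}.

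The remaining assertions are bookkeeping. Inserting $\Gamma_{n}(R^{(k-1)})\le N_{n}$ from \prettyref{lem:5-1} into \eqref{eq:5-5} gives the uniform factor $1-1/N_{n}^{2}$, and iterating over $k$ yields \eqref{eq:5-6}; since $1-1/N_{n}^{2}<1$ this forces $\Vert R^{(k)}\Vert_{2}\to0$. For the series I would quote the telescoping identity \eqref{eq:2-4-1} from \prettyref{lem:2-2}, which gives $R-\sum_{k=1}^{m}D_{k}=R^{(m)}$; taking the Hilbert-Schmidt norm and applying \eqref{eq:5-6} yields \eqref{eq:5-7}, and letting $m\to\infty$ shows $R=\sum_{k}D_{k}$ in $\mathcal{S}_{2}$. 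Finally, \eqref{eq:5-8} is just the specialization of \eqref{eq:5-5} to $\Gamma_{n}(R^{(k-1)})=1$.

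I expect no serious obstacle inside the theorem itself, since the genuine content has already been isolated in \prettyref{lem:5-2}: that lemma is precisely what upgrades mere Loewner monotonicity $\Vert R^{(k)}\Vert_{2}\le\Vert R^{(k-1)}\Vert_{2}$ into a quantitative drop proportional to the removed block. The only points that demand care are the degenerate case $R^{(k-1)}=0$, where $\Gamma_{n}$ is undefined but the estimate is trivial, and ensuring that the max-versus-average step is coupled correctly to the definition of $\Gamma_{n}$ so that the one-step factor emerges as $1/(\Gamma_{n}N_{n})$ rather than the cruder $1/N_{n}$; it is exactly this coupling that makes the coherence parameter $\Gamma_{n}$ visible in the rate and explains when the HS rule beats the trace-only bound.
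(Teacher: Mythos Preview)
Your proposal is correct and follows essentially the same route as the paper: apply \prettyref{lem:5-2} to get $\Vert R^{(k)}\Vert_{2}^{2}\le\Vert R^{(k-1)}\Vert_{2}^{2}-\Vert D_{k}\Vert_{2}^{2}$, use the max-versus-average bound together with the definition of $\Gamma_{n}$ to control $\Vert D_{k}\Vert_{2}^{2}$ from below, and then iterate using $\Gamma_{n}\le N_{n}$ from \prettyref{lem:5-1}. Your explicit handling of the degenerate case $R^{(k-1)}=0$ is a nice touch that the paper leaves implicit.
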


\begin{proof}
Fix $k\ge1$ and set $A=R^{\left(k-1\right)}$. By construction, $D_{k}=C_{w_{k}}\left(A\right)$
for some $w_{k}\in\mathcal{T}_{n}$, and we have $0\le D_{k}\le A$.
\prettyref{lem:5-2} then gives 
\begin{equation}
\Vert R^{\left(k\right)}\Vert^{2}_{2}=\left\Vert A-D_{k}\right\Vert ^{2}_{2}\le\left\Vert A\right\Vert ^{2}_{2}-\left\Vert D_{k}\right\Vert ^{2}_{2}.\label{eq:5-9}
\end{equation}
By the choice of $w_{k}$, the block $D_{k}$ has maximal HS norm
among $\left\{ C_{w}\left(A\right):w\in\mathcal{T}_{n}\right\} $.
Hence 
\[
\left\Vert D_{k}\right\Vert ^{2}_{2}=\max_{w\in\mathcal{T}_{n}}\left\Vert C_{w}\left(A\right)\right\Vert ^{2}_{2}\ge\frac{1}{N_{n}}\sum_{w\in\mathcal{T}_{n}}\left\Vert C_{w}\left(A\right)\right\Vert ^{2}_{2}.
\]
By the definition of $\Gamma_{n}\left(A\right)$, 
\[
\sum_{w\in\mathcal{T}_{n}}\left\Vert C_{w}\left(A\right)\right\Vert ^{2}_{2}=\frac{\left\Vert A\right\Vert ^{2}_{2}}{\Gamma_{n}\left(A\right)}.
\]
Putting these together, 
\[
\left\Vert D_{k}\right\Vert ^{2}_{2}\ge\frac{1}{N_{n}}\cdot\frac{\left\Vert A\right\Vert ^{2}_{2}}{\Gamma_{n}\left(A\right)}=\frac{1}{\Gamma_{n}\left(A\right)N_{n}}\left\Vert A\right\Vert ^{2}_{2}.
\]
Substituting into \eqref{eq:5-9} yields 
\[
\Vert R^{\left(k\right)}\Vert^{2}_{2}\le\left(1-\frac{1}{\Gamma_{n}\left(A\right)N_{n}}\right)\left\Vert A\right\Vert ^{2}_{2}=\left(1-\frac{1}{\Gamma_{n}\left(R^{\left(k-1\right)}\right)N_{n}}\right)\Vert R^{\left(k-1\right)}\Vert^{2}_{2},
\]
which is the first assertion \eqref{eq:5-5}.

The uniform bound follows from $\Gamma_{n}\left(R^{\left(k-1\right)}\right)\le N_{n}$,
see \prettyref{lem:5-1}. This gives 
\[
\Vert R^{\left(k\right)}\Vert^{2}_{2}\le\left(1-\frac{1}{N^{2}_{n}}\right)\Vert R^{\left(k-1\right)}\Vert^{2}_{2}
\]
for all $k$, and iterating yields \eqref{eq:5-6}.

In particular, $R^{\left(k\right)}\to0$ in $\mathcal{S}_{2}$. Since
$R^{\left(m\right)}=R-\sum^{m}_{k=1}D_{k}$ by construction, this
is the same as saying that $\sum^{m}_{k=1}D_{k}\to R$ in $\mathcal{S}_{2}$,
with the remainder bound stated in \eqref{eq:5-7}.

The final improvement \eqref{eq:5-8} in the block diagonal case is
immediate from the proof: if $R^{\left(k-1\right)}$ is block diagonal
at depth $n$, then $\Gamma_{n}\left(R^{\left(k-1\right)}\right)=1$
by \prettyref{lem:5-1}, and the factor becomes $1-1/N_{n}$. 
\end{proof}

\begin{rem}
The depth $n$ HS rule should be compared with the trace based rule
from \prettyref{sec:4}. In the trace class setting, each $R^{\left(k\right)}$
determines a boundary measure $\mu_{R^{\left(k\right)}}$ on $\partial\mathcal{T}$,
and the depth $n$ trace greedy rule selects a cylinder of maximal
$\mu_{R^{\left(k-1\right)}}$-mass. The HS rule uses the same packet
decomposition, but now the quantity $\left\Vert C_{w}\left(R^{\left(k-1\right)}\right)\right\Vert _{2}$
depends not only on the trace mass of the depth $n$ block, but also
on how $R^{\left(k-1\right)}$ sits relative to the block diagonal
decomposition at that depth. The parameter $\Gamma_{n}\left(R^{\left(k-1\right)}\right)$
measures this effect. In particular, $\Gamma_{n}\left(R^{\left(k-1\right)}\right)=1$
exactly when $R^{\left(k-1\right)}$ is block diagonal at depth $n$,
and in that case the one-step factor in \prettyref{thm:5-3} improves
to $1-\frac{1}{N_{n}}$. 
\end{rem}

The parameter $\Gamma_{n}\left(A\right)$ depends on the depth $n$
packet partition, and it is natural to ask how it behaves under refinement.
The next proposition shows that coherence can only increase with depth.
Equivalently, the total squared HS content can only decrease as the
packet partition is refined, and the loss is exactly measured by the
interaction between sibling contents.
\begin{prop}
\label{prop:5-5} Let $A\in\mathcal{S}_{2}\cap B\left(H\right)_{+}$.
For each $n\ge0$ set 
\[
M_{n}\left(A\right):=\sum_{\left|w\right|=n}\left\Vert C_{w}\left(A\right)\right\Vert ^{2}_{2}.
\]
Then $M_{n+1}\left(A\right)\le M_{n}\left(A\right)$ for every $n\ge0$.
Equivalently, whenever $\Gamma_{n}\left(A\right)$ and $\Gamma_{n+1}\left(A\right)$
are defined, one has 
\[
\Gamma_{n}\left(A\right)\le\Gamma_{n+1}\left(A\right).
\]

More precisely, 
\[
M_{n}\left(A\right)-M_{n+1}\left(A\right)=\sum_{\left|w\right|=n}\sum_{\substack{v,v'\in\mathrm{ch}\left(w\right)\\
v\ne v'
}
}\mathrm{tr}\left(C_{v}\left(A\right)C_{v'}\left(A\right)\right).
\]
\end{prop}

\begin{proof}
Fix $w\in\mathcal{T}$ with $\left|w\right|=n$. By \prettyref{eq:2-3},
$C_{w}\left(A\right)=\sum_{v\in\mathrm{ch}\left(w\right)}C_{v}\left(A\right)$.
Therefore 
\[
\left\Vert C_{w}\left(A\right)\right\Vert ^{2}_{2}=\sum_{v\in\mathrm{ch}\left(w\right)}\left\Vert C_{v}\left(A\right)\right\Vert ^{2}_{2}+\sum_{\substack{v,v'\in\mathrm{ch}\left(w\right)\\
v\ne v'
}
}\mathrm{tr}\left(C_{v}\left(A\right)C_{v'}\left(A\right)\right).
\]
Since each $C_{v}\left(A\right)$ is positive, every cross term $\mathrm{tr}\left(C_{v}\left(A\right)C_{v'}\left(A\right)\right)$
is nonnegative. Hence 
\[
\left\Vert C_{w}\left(A\right)\right\Vert ^{2}_{2}\ge\sum_{v\in\mathrm{ch}\left(w\right)}\left\Vert C_{v}\left(A\right)\right\Vert ^{2}_{2}.
\]
Summing over all nodes $w$ with $\left|w\right|=n$ gives $M_{n}\left(A\right)\ge M_{n+1}\left(A\right)$,
and the displayed identity for the difference follows from the same
expansion. By the definition of $\Gamma_{n}\left(A\right)$, the monotonicity
of $\Gamma_{n}\left(A\right)$ is immediate. 
\end{proof}

The parameter $\Gamma_{n}\left(A\right)$ depends on the depth $n$
packet partition, and it is natural to ask whether it has a limit
as $n\to\infty$, and what object determines that limit. The next
result answers this. The limit always exists, and it is governed by
the part of $A$ that remains block diagonal at every depth. Equivalently,
the terminal squared HS content is the squared HS norm of the orthogonal
projection of $A$ onto the intersection of the depth $n$ block diagonal
subspaces.
\begin{thm}
\label{thm:5-6} Let 
\[
\mathcal{B}_{n}:=\left\{ X\in\mathcal{S}_{2}:E_{n}\left(X\right)=X\right\} ,\qquad\mathcal{B}_{\infty}:=\bigcap_{n\ge0}\mathcal{B}_{n},
\]
and let $E_{\infty}$ be the orthogonal projection of $\mathcal{S}_{2}$
onto $\mathcal{B}_{\infty}$. Then for every $A\in\mathcal{S}_{2}\cap B\left(H\right)_{+}$
one has 
\[
\lim_{n\to\infty}\sum_{\left|w\right|=n}\left\Vert C_{w}\left(A\right)\right\Vert ^{2}_{2}=\left\Vert E_{\infty}\left(A\right)\right\Vert ^{2}_{2}.
\]
Equivalently, 
\[
\lim_{n\to\infty}\Gamma_{n}\left(A\right)=\frac{\left\Vert A\right\Vert ^{2}_{2}}{\left\Vert E_{\infty}\left(A\right)\right\Vert ^{2}_{2}},
\]
with the convention that the limit is $+\infty$ when $E_{\infty}\left(A\right)=0$.
\end{thm}

\begin{proof}
For each $n\ge0$, the map $E_{n}$ is the orthogonal projection of
$\mathcal{S}_{2}$ onto the closed subspace $\mathcal{B}_{n}$, by
the proof of \prettyref{lem:5-1}. Since every depth $n+1$ block
diagonal operator is also depth $n$ block diagonal, one has 
\[
\mathcal{B}_{n+1}\subset\mathcal{B}_{n}
\]
for every $n\ge0$. Thus $\left(\mathcal{B}_{n}\right)_{n\ge0}$ is
a decreasing sequence of closed subspaces of the Hilbert space $\mathcal{S}_{2}$.

Fix $X\in\mathcal{S}_{2}$. Since $\mathcal{B}_{m}\subset\mathcal{B}_{n}$
for $m\ge n$, the corresponding orthogonal projections satisfy 
\[
E_{m}E_{n}=E_{n}E_{m}=E_{m}.
\]
Hence, for $m\ge n$, 
\begin{align*}
\left\Vert E_{n}\left(X\right)-E_{m}\left(X\right)\right\Vert ^{2}_{2} & =\left\Vert E_{n}\left(X\right)\right\Vert ^{2}_{2}-2\Re\left\langle E_{n}\left(X\right),E_{m}\left(X\right)\right\rangle _{2}+\left\Vert E_{m}\left(X\right)\right\Vert ^{2}_{2}\\
 & =\left\Vert E_{n}\left(X\right)\right\Vert ^{2}_{2}-\left\Vert E_{m}\left(X\right)\right\Vert ^{2}_{2}.
\end{align*}
In particular, the sequence $\left\Vert E_{n}\left(X\right)\right\Vert ^{2}_{2}$
is nonincreasing and bounded below by $0$, so it converges, and the
displayed identity shows that $\left(E_{n}\left(X\right)\right)_{n\ge0}$
is Cauchy in $\mathcal{S}_{2}$. Let $Y$ be its limit.

For each fixed $j\ge0$, one has $E_{n}\left(X\right)\in\mathcal{B}_{j}$
whenever $n\ge j$. Since $\mathcal{B}_{j}$ is closed, it follows
that $Y\in\mathcal{B}_{j}$. Thus $Y\in\mathcal{B}_{\infty}$.

Now let $Z\in\mathcal{B}_{\infty}$. Then $Z\in\mathcal{B}_{n}$ for
every $n$, so 
\[
\left\langle X-E_{n}\left(X\right),Z\right\rangle _{2}=0
\]
for every $n$. Passing to the limit gives 
\[
\left\langle X-Y,Z\right\rangle _{2}=0.
\]
Hence $Y$ is the orthogonal projection of $X$ onto $\mathcal{B}_{\infty}$,
so $Y=E_{\infty}\left(X\right)$. Therefore 
\[
E_{n}\left(X\right)\to E_{\infty}\left(X\right)\quad\text{in }\mathcal{S}_{2}.
\]

Now apply this to $X=A$. By \prettyref{lem:5-1}, 
\[
\sum_{\left|w\right|=n}\left\Vert C_{w}\left(A\right)\right\Vert ^{2}_{2}=\mathrm{tr}\left(E_{n}\left(A\right)A\right).
\]
Since $E_{n}$ is the orthogonal projection onto $\mathcal{B}_{n}$,
one also has 
\[
\mathrm{tr}\left(E_{n}\left(A\right)A\right)=\left\langle E_{n}\left(A\right),A\right\rangle _{2}=\left\Vert E_{n}\left(A\right)\right\Vert ^{2}_{2}.
\]
Thus 
\[
\sum_{\left|w\right|=n}\left\Vert C_{w}\left(A\right)\right\Vert ^{2}_{2}=\left\Vert E_{n}\left(A\right)\right\Vert ^{2}_{2}.
\]
Passing to the limit gives 
\[
\lim_{n\to\infty}\sum_{\left|w\right|=n}\left\Vert C_{w}\left(A\right)\right\Vert ^{2}_{2}=\left\Vert E_{\infty}\left(A\right)\right\Vert ^{2}_{2}.
\]

Finally, whenever $\Gamma_{n}\left(A\right)$ is defined, 
\[
\Gamma_{n}\left(A\right)=\frac{\left\Vert A\right\Vert ^{2}_{2}}{\sum_{\left|w\right|=n}\left\Vert C_{w}\left(A\right)\right\Vert ^{2}_{2}}.
\]
If $E_{\infty}\left(A\right)\neq0$, the denominator converges to
$\left\Vert E_{\infty}\left(A\right)\right\Vert ^{2}_{2}>0$, so 
\[
\lim_{n\to\infty}\Gamma_{n}\left(A\right)=\frac{\left\Vert A\right\Vert ^{2}_{2}}{\left\Vert E_{\infty}\left(A\right)\right\Vert ^{2}_{2}}.
\]
If $E_{\infty}\left(A\right)=0$, then the denominator tends to $0$,
and therefore $\Gamma_{n}\left(A\right)\to+\infty$. 
\end{proof}

We end this section with the HS counterpart of \prettyref{prop:4-3}.
The next result shows that the improved one-step factor in \prettyref{eq:5-8}
is best possible in the block diagonal case. In other words, even
under that additional structure, no better bound depending only on
$N_{n}$ can hold.
\begin{prop}
\label{prop:5-6} Fix $n\ge1$. There exists $A\in\mathcal{S}_{2}\cap B\left(H\right)_{+}$
such that $A$ is block diagonal at depth $n$ and the depth $n$
HS greedy rule satisfies 
\[
\left\Vert A-D_{1}\right\Vert ^{2}_{2}=\left(1-\frac{1}{N_{n}}\right)\left\Vert A\right\Vert ^{2}_{2}.
\]
Consequently, the factor $1-\frac{1}{N_{n}}$ in \prettyref{eq:5-8}
is sharp among one-step bounds depending only on $N_{n}$ in the block
diagonal regime. 
\end{prop}

\begin{proof}
For each $w\in\mathcal{T}_{n}$ choose a unit vector $e_{w}\in W_{w}$,
and set 
\[
A=\sum_{w\in\mathcal{T}_{n}}\left|e_{w}\left\rangle \right\langle e_{w}\right|.
\]
Then $A\in\mathcal{S}_{2}\cap B\left(H\right)_{+}$ and 
\[
P_{w}A=AP_{w}
\]
for every $w\in\mathcal{T}_{n}$, so $A$ is block diagonal at depth
$n$. Moreover, 
\[
C_{w}\left(A\right)=A^{1/2}P_{w}A^{1/2}=P_{w}AP_{w}=\left|e_{w}\left\rangle \right\langle e_{w}\right|
\]
for every $w\in\mathcal{T}_{n}$. Hence 
\[
\left\Vert C_{w}\left(A\right)\right\Vert ^{2}_{2}=1
\]
for every $w\in\mathcal{T}_{n}$, and therefore 
\[
\left\Vert A\right\Vert ^{2}_{2}=\sum_{w\in\mathcal{T}_{n}}\left\Vert C_{w}\left(A\right)\right\Vert ^{2}_{2}=N_{n}.
\]
The HS greedy rule may choose any depth $n$ node, so 
\[
D_{1}=\left|e_{w_{1}}\left\rangle \right\langle e_{w_{1}}\right|
\]
for some $w_{1}\in\mathcal{T}_{n}$. Since the blocks are orthogonal
in $\mathcal{S}_{2}$, one has 
\begin{align*}
\left\Vert A-D_{1}\right\Vert ^{2}_{2} & =\sum_{w\in\mathcal{T}_{n}\setminus\left\{ w_{1}\right\} }\left\Vert C_{w}\left(A\right)\right\Vert ^{2}_{2}\\
 & =N_{n}-1=\left(1-\frac{1}{N_{n}}\right)N_{n}=\left(1-\frac{1}{N_{n}}\right)\left\Vert A\right\Vert ^{2}_{2}.
\end{align*}
\end{proof}

\section{Adaptive content extraction}\label{sec:6}

In Sections \ref{sec:4} and \ref{sec:5}, the packet contents were
used at a fixed depth, corresponding to a fixed resolution in the
multiresolution structure on $B\left(H\right)_{+}$ determined by
the refinement tree. We now allow that resolution to vary along the
tree. The basic object is a finite active partition of the terminal
depth $N$ packet tree. Refining one node of such a partition replaces
a coarse content by its children, and the first question is how the
total squared HS content changes under that local refinement. The
answer is exact: the change is governed by a canonical nonnegative
interaction defect attached to the refined node. In this way, the
adaptive problem asks where finer resolution is actually needed, and
the resulting local defect quantities add along the tree to give an
additive refinement calculus for adaptive content decompositions.

\subsection{Active partitions and adaptive extraction}\label{sec:6-1}

Fix a terminal depth $N\ge1$ once and for all.
\begin{defn}
\label{def:6-1} A finite set $\Lambda\subset\mathcal{T}$ is called
an active partition up to depth $N$ if the following hold: 
\begin{enumerate}
\item each $w\in\Lambda$ satisfies $\left|w\right|\le N$, 
\item $\Lambda$ is an antichain, 
\item every node $z\in\mathcal{T}$ with $\left|z\right|=N$ has a unique
ancestor in $\Lambda$. 
\end{enumerate}
\end{defn}

Thus an active partition is a variable tree cut through the depth
$N$ packet decomposition. The extremal cases are $\left\{ \emptyset\right\} $
and $\mathcal{T}_{N}:=\left\{ w\in\mathcal{T}:\left|w\right|=N\right\} $. 
\begin{defn}
\label{def:6-2} Let $\Lambda$ be an active partition up to depth
$N$, and let $w\in\Lambda$ with $\left|w\right|<N$. The refinement
of $\Lambda$ at $w$ is 
\[
\Lambda\left[w\right]:=\left(\Lambda\setminus\left\{ w\right\} \right)\cup\mathrm{ch}\left(w\right).
\]
\end{defn}

\begin{lem}
\label{lem:6-3} Let $\Lambda$ be an active partition up to depth
$N$, and let $A\in B\left(H\right)_{+}$. Then 
\[
A=\sum_{w\in\Lambda}C_{w}\left(A\right).
\]
\end{lem}

\begin{proof}
For each $w\in\Lambda$, let $\mathcal{D}_{N}\left(w\right):=\left\{ z\in\mathcal{T}:\left|z\right|=N,\ z\succeq w\right\} $.
Since $\Lambda$ is an active partition, the sets $\mathcal{D}_{N}\left(w\right)$,
$w\in\Lambda$, form a partition of $\mathcal{T}_{N}$. By repeated
application of \prettyref{eq:2-3}, 
\[
C_{w}\left(A\right)=\sum_{z\in\mathcal{D}_{N}\left(w\right)}C_{z}\left(A\right)\qquad\left(w\in\Lambda\right).
\]
Summing over $w\in\Lambda$ gives 
\[
\sum_{w\in\Lambda}C_{w}\left(A\right)=\sum_{\left|z\right|=N}C_{z}\left(A\right)=A,
\]
using the fixed depth decomposition at level $N$. 
\end{proof}

For $A\in\mathcal{S}_{2}\cap B\left(H\right)_{+}$ and an active partition
$\Lambda$, define the total squared HS content at $\Lambda$ by 
\[
M_{\Lambda}\left(A\right):=\sum_{w\in\Lambda}\left\Vert C_{w}\left(A\right)\right\Vert ^{2}_{2}.
\]

If $w\in\Lambda$ with $\left|w\right|<N$, define the local refinement
defect at $w$ by 
\[
\delta_{w}\left(A\right):=\left\Vert C_{w}\left(A\right)\right\Vert ^{2}_{2}-\sum_{v\in\mathrm{ch}\left(w\right)}\left\Vert C_{v}\left(A\right)\right\Vert ^{2}_{2}.
\]

The next theorem identifies the precise cost of passing to finer resolution
at a single node. If $w$ is refined, then the resulting drop in total
squared HS content is exactly the local defect $\delta_{w}\left(A\right)$,
and this defect is given by the cumulative off-diagonal interaction
among the child contents of $w$.
\begin{thm}
\label{thm:6-4} Let $\Lambda$ be an active partition up to depth
$N$, let $w\in\Lambda$ with $\left|w\right|<N$, and let $A\in\mathcal{S}_{2}\cap B\left(H\right)_{+}$.
Then 
\begin{equation}
M_{\Lambda}\left(A\right)-M_{\Lambda\left[w\right]}\left(A\right)=\delta_{w}\left(A\right).\label{eq:6-1}
\end{equation}
Moreover, 
\begin{equation}
\delta_{w}\left(A\right)=\sum_{\substack{v,v'\in\mathrm{ch}\left(w\right)\\
v\ne v'
}
}\mathrm{tr}\left(C_{v}\left(A\right)C_{v'}\left(A\right)\right)\ge0.\label{eq:6-2}
\end{equation}
In particular, 
\begin{equation}
M_{\Lambda\left[w\right]}\left(A\right)\le M_{\Lambda}\left(A\right).\label{eq:6-3}
\end{equation}
\end{thm}

\begin{proof}
Using the definition of $\Lambda\left[w\right]$, one has 
\[
M_{\Lambda}\left(A\right)-M_{\Lambda\left[w\right]}\left(A\right)=\left\Vert C_{w}\left(A\right)\right\Vert ^{2}_{2}-\sum_{v\in\mathrm{ch}\left(w\right)}\left\Vert C_{v}\left(A\right)\right\Vert ^{2}_{2}=\delta_{w}\left(A\right).
\]
This proves \prettyref{eq:6-1}.

By \prettyref{eq:2-3}, $C_{w}\left(A\right)=\sum_{v\in\mathrm{ch}\left(w\right)}C_{v}\left(A\right)$.
Hence 
\begin{align*}
\left\Vert C_{w}\left(A\right)\right\Vert ^{2}_{2} & =\mathrm{tr}\left(C_{w}\left(A\right)^{2}\right)\\
 & =\mathrm{tr}\left(\left(\sum_{v\in\mathrm{ch}\left(w\right)}C_{v}\left(A\right)\right)^{2}\right)\\
 & =\sum_{v\in\mathrm{ch}\left(w\right)}\left\Vert C_{v}\left(A\right)\right\Vert ^{2}_{2}+\sum_{\substack{v,v'\in\mathrm{ch}\left(w\right)\\
v\ne v'
}
}\mathrm{tr}\left(C_{v}\left(A\right)C_{v'}\left(A\right)\right).
\end{align*}
Then \prettyref{eq:6-2} holds, and \prettyref{eq:6-3} follows from
\prettyref{eq:6-1}. 
\end{proof}

The preceding theorem gives an additive refinement calculus along
any adaptive chain of active partitions.
\begin{cor}
\label{cor:6-5} Let $\Lambda_{0},\Lambda_{1},\dots,\Lambda_{m}$
be active partitions up to depth $N$, and suppose that for each $j=1,\dots,m$
there exists $w_{j}\in\Lambda_{j-1}$ with $\left|w_{j}\right|<N$
such that $\Lambda_{j}=\Lambda_{j-1}\left[w_{j}\right]$. Then for
every $A\in\mathcal{S}_{2}\cap B\left(H\right)_{+}$ one has 
\[
M_{\Lambda_{0}}\left(A\right)-M_{\Lambda_{m}}\left(A\right)=\sum^{m}_{j=1}\delta_{w_{j}}\left(A\right).
\]
Equivalently, 
\[
M_{\Lambda_{0}}\left(A\right)-M_{\Lambda_{m}}\left(A\right)=\sum^{m}_{j=1}\sum_{\substack{v,v'\in\mathrm{ch}\left(w_{j}\right)\\
v\ne v'
}
}\mathrm{tr}\left(C_{v}\left(A\right)C_{v'}\left(A\right)\right).
\]
\end{cor}

\begin{proof}
Apply \prettyref{thm:6-4} at each step and telescope: 
\[
M_{\Lambda_{0}}\left(A\right)-M_{\Lambda_{m}}\left(A\right)=\sum^{m}_{j=1}\left(M_{\Lambda_{j-1}}\left(A\right)-M_{\Lambda_{j}}\left(A\right)\right)=\sum^{m}_{j=1}\delta_{w_{j}}\left(A\right).
\]
The second formula follows from the explicit expression for $\delta_{w_{j}}\left(A\right)$
in \prettyref{thm:6-4}. 
\end{proof}

We now attach the positive residual process from \prettyref{sec:2}
to an adaptive chain of active partitions.
\begin{defn}
\label{def:6-6} Let $\Lambda_{0}$ be an active partition up to depth
$N$, and let $R\in B\left(H\right)_{+}$. An adaptive content extraction
process consists of sequences $\left(\Lambda_{k}\right)_{k\ge0}$,
$\left(w_{k}\right)_{k\ge1}$, $\left(D_{k}\right)_{k\ge1}$, $\left(R^{\left(k\right)}\right)_{k\ge0}$,
such that 
\begin{enumerate}
\item $R^{\left(0\right)}=R$, 
\item for each $k\ge1$, one has $w_{k}\in\Lambda_{k-1}$, 
\item if $\left|w_{k}\right|<N$, then $\Lambda_{k}=\Lambda_{k-1}\left[w_{k}\right]$,
while if $\left|w_{k}\right|=N$, one sets $\Lambda_{k}=\Lambda_{k-1}$, 
\item $D_{k}=C_{w_{k}}\left(R^{\left(k-1\right)}\right)$, $R^{\left(k\right)}=R^{\left(k-1\right)}-D_{k}$. 
\end{enumerate}
\end{defn}

\begin{cor}
\label{cor:6-7} Let $\left(\Lambda_{k},w_{k},D_{k},R^{\left(k\right)}\right)$
be an adaptive content extraction process. Then for every $m\ge1$
one has 
\[
R=R^{\left(m\right)}+\sum^{m}_{k=1}D_{k},\quad R^{\left(m\right)}\in B\left(H\right)_{+}.
\]
\end{cor}

\begin{proof}
The identity follows by iterating 
\[
R^{\left(k\right)}=R^{\left(k-1\right)}-D_{k},\quad D_{k}=C_{w_{k}}\left(R^{\left(k-1\right)}\right),
\]
exactly as in \prettyref{lem:2-2}. Since each $D_{k}=C_{w_{k}}\left(R^{\left(k-1\right)}\right)$
is positive and satisfies 
\[
0\le D_{k}\le R^{\left(k-1\right)},
\]
it follows that $R^{\left(k\right)}\in B\left(H\right)_{+}$ for every
$k$. 
\end{proof}

\subsection{Adaptive greedy extraction}\label{sec:6-2}

We now turn the adaptive approach from \prettyref{sec:6-1} into a
greedy selection process. The relevant combinatorial quantity is no
longer the size of a fixed depth packet level, but the size of the
current active partition. This leads to adaptive analogues of the
trace and HS estimates from Sections \ref{sec:4} and \ref{sec:5},
with the current partition size replacing the fixed quantity $N_{n}$.

Let $\Lambda$ be an active partition up to depth $N$. Since every
depth $N$ node has a unique ancestor in $\Lambda$, the subspaces
$\left\{ W_{w}:w\in\Lambda\right\} $ are pairwise orthogonal and
\[
H=\bigoplus_{w\in\Lambda}W_{w},\qquad I=\sum_{w\in\Lambda}P_{w}.
\]
Accordingly, for $X\in\mathcal{S}_{2}$ we define 
\[
E_{\Lambda}\left(X\right):=\sum_{w\in\Lambda}P_{w}XP_{w}.
\]
For $A\in\mathcal{S}_{2}\cap B\left(H\right)_{+}$, define 
\[
\Gamma_{\Lambda}\left(A\right):=\frac{\left\Vert A\right\Vert ^{2}_{2}}{\sum_{w\in\Lambda}\left\Vert C_{w}\left(A\right)\right\Vert ^{2}_{2}},
\]
whenever the denominator is nonzero.
\begin{lem}
\label{lem:6-8} Let $\Lambda$ be an active partition up to depth
$N$, and let $A\in\mathcal{S}_{2}\cap B\left(H\right)_{+}$. Then
\begin{equation}
\sum_{w\in\Lambda}\left\Vert C_{w}\left(A\right)\right\Vert ^{2}_{2}=\mathrm{tr}\left(E_{\Lambda}\left(A\right)A\right).\label{eq:6-4}
\end{equation}
Moreover, 
\begin{equation}
\frac{1}{\#\Lambda}\left\Vert A\right\Vert ^{2}_{2}\le\sum_{w\in\Lambda}\left\Vert C_{w}\left(A\right)\right\Vert ^{2}_{2}\le\left\Vert A\right\Vert ^{2}_{2},\label{eq:6-5}
\end{equation}
and hence 
\[
1\le\Gamma_{\Lambda}\left(A\right)\le\#\Lambda.
\]
Finally, $\Gamma_{\Lambda}\left(A\right)=1$ if and only if $E_{\Lambda}\left(A\right)=A$,
equivalently if and only if $P_{w}A=AP_{w}$ for all $w\in\Lambda$. 
\end{lem}

\begin{proof}
The proof is the same as that of \prettyref{lem:5-1}, with the fixed
depth family $\mathcal{T}_{n}$ replaced by the active partition $\Lambda$.
Since 
\[
H=\bigoplus_{w\in\Lambda}W_{w},\qquad I=\sum_{w\in\Lambda}P_{w},
\]
the map $E_{\Lambda}$ is the orthogonal projection of $\mathcal{S}_{2}$
onto the subspace of $\Lambda$ block diagonal operators. Also, \prettyref{lem:6-3}
gives 
\[
A=\sum_{w\in\Lambda}C_{w}\left(A\right).
\]
The identity \prettyref{eq:6-4}, the bounds \prettyref{eq:6-5},
and the characterization of the case $\Gamma_{\Lambda}\left(A\right)=1$
now follow from the same argument as in \prettyref{lem:5-1}. 
\end{proof}

We now define the adaptive greedy rules. Let $\left(\Lambda_{k},w_{k},D_{k},R^{\left(k\right)}\right)$
be an adaptive content extraction process in the sense of \prettyref{def:6-6}.

We call the process trace greedy if 
\[
w_{k}\in\arg\max_{w\in\Lambda_{k-1}}\mathrm{tr}\left(C_{w}\left(R^{\left(k-1\right)}\right)\right)
\]
for every $k\ge1$.

We call the process HS greedy if 
\[
w_{k}\in\arg\max_{w\in\Lambda_{k-1}}\left\Vert C_{w}\left(R^{\left(k-1\right)}\right)\right\Vert _{2}
\]
for every $k\ge1$.

The next theorem gives the adaptive counterparts of the fixed depth
estimates from Sections \ref{sec:4} and \ref{sec:5}.
\begin{thm}
\label{thm:6-9} Let $\left(\Lambda_{k},w_{k},D_{k},R^{\left(k\right)}\right)$
be an adaptive content extraction process.
\begin{enumerate}
\item If the process is trace greedy, then for every $k\ge1$, 
\begin{equation}
\mathrm{tr}\left(R^{\left(k\right)}\right)\le\left(1-\frac{1}{\#\Lambda_{k-1}}\right)\mathrm{tr}\left(R^{\left(k-1\right)}\right).\label{eq:6-6}
\end{equation}
Consequently, for every $m\ge1$, 
\begin{equation}
\mathrm{tr}\left(R^{\left(m\right)}\right)\le\prod^{m-1}_{j=0}\left(1-\frac{1}{\#\Lambda_{j}}\right)\mathrm{tr}\left(R\right).\label{eq:6-7}
\end{equation}
\item If the process is HS greedy, then for every $k\ge1$, 
\begin{equation}
\left\Vert R^{\left(k\right)}\right\Vert ^{2}_{2}\le\left(1-\frac{1}{\Gamma_{\Lambda_{k-1}}\left(R^{\left(k-1\right)}\right)\#\Lambda_{k-1}}\right)\left\Vert R^{\left(k-1\right)}\right\Vert ^{2}_{2}.\label{eq:6-8}
\end{equation}
Consequently, for every $m\ge1$, 
\begin{equation}
\left\Vert R^{\left(m\right)}\right\Vert ^{2}_{2}\le\prod^{m-1}_{j=0}\left(1-\frac{1}{\Gamma_{\Lambda_{j}}\left(R^{\left(j\right)}\right)\#\Lambda_{j}}\right)\left\Vert R\right\Vert ^{2}_{2},\label{eq:6-9}
\end{equation}
and, using \prettyref{lem:6-8}, 
\begin{equation}
\left\Vert R^{\left(m\right)}\right\Vert ^{2}_{2}\le\prod^{m-1}_{j=0}\left(1-\frac{1}{\left(\#\Lambda_{j}\right)^{2}}\right)\left\Vert R\right\Vert ^{2}_{2}.\label{eq:6-10}
\end{equation}
\end{enumerate}
\end{thm}

\begin{proof}
For part (1), set $A=R^{\left(k-1\right)}$. By \prettyref{lem:6-3},
$A=\sum_{w\in\Lambda_{k-1}}C_{w}\left(A\right)$. Taking traces gives
\[
\mathrm{tr}\left(A\right)=\sum_{w\in\Lambda_{k-1}}\mathrm{tr}\left(C_{w}\left(A\right)\right).
\]
The right-hand side is a sum of $\#\Lambda_{k-1}$ nonnegative numbers.
Hence the maximal term is at least the average, so 
\[
\mathrm{tr}\left(D_{k}\right)\ge\frac{1}{\#\Lambda_{k-1}}\mathrm{tr}\left(R^{\left(k-1\right)}\right).
\]
Therefore 
\[
\mathrm{tr}\left(R^{\left(k\right)}\right)=\mathrm{tr}\left(R^{\left(k-1\right)}\right)-\mathrm{tr}\left(D_{k}\right)\le\left(1-\frac{1}{\#\Lambda_{k-1}}\right)\mathrm{tr}\left(R^{\left(k-1\right)}\right),
\]
which is \prettyref{eq:6-6}. Iterating gives \prettyref{eq:6-7}.

For part (2), again set $A=R^{\left(k-1\right)}$. Since $0\le D_{k}\le A$,
\prettyref{lem:5-2} gives 
\[
\left\Vert R^{\left(k\right)}\right\Vert ^{2}_{2}=\left\Vert A-D_{k}\right\Vert ^{2}_{2}\le\left\Vert A\right\Vert ^{2}_{2}-\left\Vert D_{k}\right\Vert ^{2}_{2}.
\]
By the HS greedy choice, 
\[
\left\Vert D_{k}\right\Vert ^{2}_{2}=\max_{w\in\Lambda_{k-1}}\left\Vert C_{w}\left(A\right)\right\Vert ^{2}_{2}\ge\frac{1}{\#\Lambda_{k-1}}\sum_{w\in\Lambda_{k-1}}\left\Vert C_{w}\left(A\right)\right\Vert ^{2}_{2}.
\]
Using the definition of $\Gamma_{\Lambda_{k-1}}\left(A\right)$, this
becomes 
\[
\left\Vert D_{k}\right\Vert ^{2}_{2}\ge\frac{1}{\Gamma_{\Lambda_{k-1}}\left(A\right)\#\Lambda_{k-1}}\left\Vert A\right\Vert ^{2}_{2}.
\]
Substituting into the previous inequality yields \prettyref{eq:6-8}.
Iterating gives \prettyref{eq:6-9}, and \prettyref{eq:6-10} follows
from 
\[
\Gamma_{\Lambda_{j}}\left(R^{\left(j\right)}\right)\le\#\Lambda_{j}
\]
from \prettyref{lem:6-8}. 
\end{proof}

The adaptive bounds above can be much smaller than the fixed depth
bounds when the active partitions remain far below the full packet
level. The binary case makes this visible.
\begin{prop}
\label{prop:6-10} Assume that each nonterminal node of $\mathcal{T}$
has exactly two children. Fix a terminal depth $N\ge2$, and start
from the active partition 
\[
\Lambda_{0}=\mathcal{T}_{1}.
\]
Suppose that for the first $m$ steps, one always refines a node of
depth strictly smaller than $N$. Then 
\[
\#\Lambda_{j}=j+2,\quad\text{for }0\le j\le m-1.
\]
Consequently, the adaptive trace bound \prettyref{eq:6-7} gives 
\begin{equation}
\mathrm{tr}\left(R^{\left(m\right)}\right)\le\frac{1}{m+1}\mathrm{tr}\left(R\right),\label{eq:6-11}
\end{equation}
and the uniform adaptive HS bound \prettyref{eq:6-10} gives 
\begin{equation}
\left\Vert R^{\left(m\right)}\right\Vert ^{2}_{2}\le\frac{m+2}{2m+2}\left\Vert R\right\Vert ^{2}_{2}.\label{eq:6-12}
\end{equation}
\end{prop}

\begin{proof}
Since each refinement replaces one node by two children, the cardinality
of the active partition increases by one at each step. Thus 
\[
\#\Lambda_{j}=j+2\qquad\text{for }0\le j\le m-1.
\]
Substituting this into \prettyref{eq:6-7} gives 
\[
\mathrm{tr}\left(R^{\left(m\right)}\right)\le\prod^{m-1}_{j=0}\left(1-\frac{1}{j+2}\right)\mathrm{tr}\left(R\right)=\prod^{m-1}_{j=0}\frac{j+1}{j+2}\mathrm{tr}\left(R\right)=\frac{1}{m+1}\mathrm{tr}\left(R\right),
\]
which is \prettyref{eq:6-11}.

Likewise, \prettyref{eq:6-10} gives 
\[
\left\Vert R^{\left(m\right)}\right\Vert ^{2}_{2}\le\prod^{m-1}_{j=0}\left(1-\frac{1}{\left(j+2\right)^{2}}\right)\left\Vert R\right\Vert ^{2}_{2}.
\]
Since 
\[
1-\frac{1}{\left(j+2\right)^{2}}=\frac{j+1}{j+2}\frac{j+3}{j+2},
\]
the product telescopes: 
\[
\prod^{m-1}_{j=0}\left(1-\frac{1}{\left(j+2\right)^{2}}\right)=\prod^{m-1}_{j=0}\frac{j+1}{j+2}\prod^{m-1}_{j=0}\frac{j+3}{j+2}=\frac{1}{m+1}\frac{m+2}{2}=\frac{m+2}{2m+2}.
\]
This proves \prettyref{eq:6-12}.
\end{proof}

\begin{example}
For the binary case, at terminal depth $N$, the fixed depth bounds
from Sections \ref{sec:4} and \ref{sec:5} are 
\begin{equation}
\mathrm{tr}\left(R^{\left(m\right)}\right)\le\left(1-\frac{1}{2^{N}}\right)^{m}\mathrm{tr}\left(R\right)\label{eq:6-13}
\end{equation}
and 
\begin{equation}
\left\Vert R^{\left(m\right)}\right\Vert ^{2}_{2}\le\left(1-\frac{1}{2^{2N}}\right)^{m}\left\Vert R\right\Vert ^{2}_{2}.\label{eq:6-14}
\end{equation}
These are the estimates from Theorems \ref{thm:4-1} and \ref{thm:5-3}
at depth $N$, with $N_{N}=2^{N}$. 

When $N=10$ and $m=50$, the adaptive factors in \prettyref{eq:6-11}
and \prettyref{eq:6-12} are 
\[
\frac{1}{51}\approx0.0196,\qquad\frac{52}{102}\approx0.5098,
\]
whereas the fixed depth factors in \prettyref{eq:6-13} and \prettyref{eq:6-14}
are 
\[
\left(1-\frac{1}{1024}\right)^{50}\approx0.9523,\qquad\left(1-\frac{1}{1024^{2}}\right)^{50}\approx0.99995.
\]
\end{example}

\bibliographystyle{amsalpha}
\bibliography{ref}

\end{document}